\documentclass[12pt]{amsart}
\usepackage{amssymb}

\usepackage[
width=31pc,
height=48pc,
margin=1.in,
footskip=30pt,
]{geometry}
\usepackage{layout}

\usepackage{graphicx}
\usepackage{epsfig}
\usepackage{subcaption}
\usepackage{amsmath,amsfonts,epstopdf}

\theoremstyle{plain}
\newtheorem{theorem}{Theorem}[section]
\newtheorem{lemma}[theorem]{Lemma}
\newtheorem{proposition}[theorem]{Proposition}
\newtheorem{corollary}[theorem]{Corollary}
\theoremstyle{definition}

\numberwithin{equation}{section}

\newcommand{\R}{\mathbb{R}}

\newcommand{\C}{\mathbb{C}}

\begin{document}
\title{Topological Entropy for Power-Law Unimodal Maps}

\date{\today}

\author[Michael Benedicks and Ana Rodrigues]{Michael Benedicks$^1$ , and Ana Rodrigues$^{2,3}$}

\address{$^1$ Department of Mathematics, KTH Royal Institute of Technology, 100\,44 STOCKHOLM, Sweden} 

\address{$^2$ Departamento de Matem\'{a}tica, Escola de Ci\^{e}ncias  e Tecnologia, Universidade de \'{E}vora, Rua Rom\~{a}o Ramalho, 59, 7000--671 \'{E}vora, Portugal }

\address{$^3$ Centro de Investiga\c{c}\~{a}o em Matem\'{a}tica e Aplica\c{c}\~{o}es, Rua Rom\~{a}o  Ramalho, 59, 7000--671 \'{E}vora, Portugal} 

\date{\today}

%\section{Introduction}

\begin{abstract}

In this paper we prove that the monotonicity of kneading sequences and
topological entropy, a fundamental structural property of the
quadratic family, extends to the class of power-law unimodal maps
$f_a(x)=a-|x|^r$ for arbitrary critical exponent $r>1$. This
generalization is nontrivial: the absence of polynomial structure and
the presence of non-integer criticality preclude the direct use of
classical arguments. Our approach adapts and extends the
Milnor–Thurston framework by introducing a Thurston-type operator
associated with the critical orbit and establishing a determinant
identity that relates its linearization to the parameter derivative of
the orbit. The main difficulty—proving positivity of this determinant
in the absence of algebraic structure—is resolved via a contraction
argument on an associated Torelli space endowed with the Teichmüller
metric, extending Thurston’s pullback construction beyond the
polynomial setting, that is to critical powers $r=2^\nu/k$,  $\nu\geq
1$, $k$ odd, and finally use continuity in $r$.

As a consequence, we show that the kneading sequence varies monotonically with the parameter, and hence that the topological entropy is an increasing function of $a$.  Our results show that the combinatorial organization of parameter space familiar from the quadratic family persists for unimodal maps with arbitrary power-law criticality, indicating that monotonicity of entropy is a robust phenomenon beyond polynomial dynamics.

\end{abstract}
\maketitle

\textbf{Keywords:}
unimodal maps, kneading sequences, topological entropy,
Milnor--Thurston theory, Teichmüller space

\textbf{MSC (2020):}
37E05, 37B40

\section{Introduction}

Maps of an interval into itself provide some of the simplest examples of
nonlinear dynamical systems. Despite their apparent simplicity, such maps
often exhibit extremely rich dynamical behavior, including sensitive
dependence on initial conditions, chaotic dynamics, and complicated
combinatorial structures. Because of this, one--dimensional dynamics has
become a central testing ground for ideas in the broader theory of
dynamical systems.

A particularly important class of interval maps is the class of
\emph{unimodal maps}, that is, continuous maps with a single critical
point at which the monotonicity changes. For such maps the orbit of the
critical point plays a decisive role in determining the global dynamics.
A powerful method for studying this orbit is provided by the theory of
\emph{kneading sequences}, introduced by Milnor and Thurston
\cite{MT}. The kneading sequence records the itinerary of the critical
value relative to the critical point and encodes the essential
combinatorial information governing the dynamics.

One of the central results in this theory concerns the monotonicity of
kneading sequences and topological entropy in parameterized families of
unimodal maps. In the quadratic family this phenomenon was first
established by Sullivan (in unpublished work) and later presented in the
lecture notes of Milnor and Thurston \cite{MT}. In this setting the kneading
sequence varies monotonically with the parameter, and consequently the
topological entropy is also a monotone function. Subsequent proofs and
refinements were given by Douady \cite{Douady1995}. Tsujii
\cite{Tsu1}, \cite{Tsu2}, gave a simple prove of monotonicity of the topological entropy for the quadratic family using Ruelle´s operator. Since then, 
monotonicity of entropy has become a fundamental structural property of
the quadratic family. In their seminal paper, Bruin and van Strien \cite{BvS} solved Milnor´s monotonicity conjecture for multimodal polynomial maps with non-degenerate real critical points. Recently, Gao \cite{Gao} proved monotonicity of topological entropy for unimodal maps arising from rational real quadratic maps.

A natural question, asked by Sullivan, \cite{Sul1988}, Tsuii, Milnor and others, is whether this monotonicity principle extends beyond
quadratic maps to more general unimodal families. In particular, one may
ask whether similar ordering properties hold for families whose critical
points have higher or non–integer order. Such maps arise naturally in a
variety of contexts and provide a natural generalization of polynomial
families.

In this paper we prove monotonicity of kneading
sequences for the family of \emph{power--law unimodal maps}
\begin{equation}\label{family}
f_a(x)=a-|x|^r , \qquad r>1 .
\end{equation}

For this family the unique critical point is located at $x=0$, and the
dynamics is governed by the orbit of the critical value $f_a(0)=a$. We
prove that the kneading sequence of this orbit varies monotonically with
the parameter $a$. As a consequence, the topological entropy of the map
is an increasing function of the parameter.

Our proof follows the general strategy developed by Milnor and Thurston
for the quadratic family. A key step is the introduction of a
{\it Thurston map} acting on a space that records the orbit of the critical
point. We derive an identity relating the parameter derivative of the
critical orbit to the determinant of the derivative of this map. The
positivity of this determinant is then established using a contraction
argument on the associated Torelli space equipped with the
Teichmüller metric. Our argument generalizes Thurston's pullback
construction to the present setting, in particular to powers
$r=2^\nu/k$, $\nu\geq 1$, $k$ odd.

%The main result of the paper can be summarized as follows.

%\begin{theorem}
%For the family of maps
%\[
%f_a(x)=a-|x|^r , \qquad r>1,
%\]
%the kneading sequence of the critical orbit varies %monotonically with
%the parameter $a$. Consequently the topological entropy %$h(f_a)$ is an
%increasing function of $a$.
%\end{theorem}

We show that the combinatorial ordering of kneading sequences,
which governs the structure of parameter space in the quadratic family,
persists for unimodal maps with arbitrary power--law criticality.

This paper establishes that the fundamental monotonicity principle governing kneading sequences and topological entropy, previously known in the quadratic setting, extends to the substantially broader class of power-law unimodal maps with arbitrary critical exponent $r>1$. This generalization is not merely formal: the absence of polynomial structure and the presence of non-integer criticality eliminate key tools traditionally used in the quadratic case, requiring a genuinely new implementation of the Milnor–Thurston strategy. The core contribution of the paper is the introduction of a Thurston-type operator adapted to the critical orbit together with a determinant identity linking parameter derivatives to its linearization, which provides a precise mechanism to control the variation of the critical orbit in parameter space. The main difficulty—establishing positivity of this determinant without relying on algebraic features specific to quadratic maps—is resolved via a contraction argument on the associated Torelli space equipped with the Teichmüller metric, extending Thurston’s pullback formalism beyond its classical polynomial framework. This yields a robust proof that the combinatorial ordering of kneading sequences persists for arbitrary power-law criticality, thereby showing that monotonicity of entropy is a structural phenomenon not tied to quadratic dynamics. In addition, the paper develops continuity  results for the entropy, linking  parameter dependence to quantitative transversality along the critical orbit. Taken together, these results demonstrate that the global organization of parameter space familiar from the quadratic family survives in a significantly more general and analytically delicate setting, opening the way for further extensions beyond polynomial dynamics.

\medskip

The paper is organized as follows. In Section 2 we recall the necessary elements of kneading theory and topological entropy for unimodal maps, establishing the combinatorial and analytic framework that underlies the entire argument. In Section 3 we introduce a Thurston-type operator associated with the critical orbit and derive a key determinant identity that links the parameter dependence of the critical orbit to the linearization of this operator. Section 4 is devoted to the core of the proof: we establish a contraction property on the associated Torelli space equipped with the Teichmüller metric, which yields the crucial positivity result and allows us to prove monotonicity of the kneading sequence and of the entropy.

%\newpage

\section{Milnor-Thurston technical tools}\label{section:main}

In this section we adapt the technical tools developed in \cite{MT} to our family of maps \eqref{family}.

%\subsection{Lap numbers}

Let $f:I\to I$ be a unimodal map and denote by $\ell(f^n)$ the number
of maximal intervals on which $f^n$ is monotone. This quantity is
called the \emph{lap number} of the iterate.

The exponential growth rate of lap numbers is closely related to the
topological entropy.

\begin{theorem}
The limit
\[
s=\lim_{n\to\infty}\ell(f^n)^{1/n}
\]
exists and satisfies
\[
h_{\mathrm{top}}(f)=\log s .
\]
\end{theorem}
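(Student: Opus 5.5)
This is the Misiurewicz–Szlenk theorem, and I would prove it in three stages: existence of the limit by subadditivity, the upper bound $h_{\mathrm{top}}\le\log s$ by a separated-set count, and the lower bound $h_{\mathrm{top}}\ge\log s$ via horseshoes.

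\emph{Existence of the limit.} If $f^n$ is monotone on each of $\ell(f^n)$ laps, and $f^m$ has $\ell(f^m)$ laps, then on each lap $J$ of $f^n$ the map $f^{m}\circ f^n$ changes monotonicity only where $f^n(x)$ crosses a turning point of $f^m$. There are at most $\ell(f^m)-1$ such crossings on $J$, since $f^n|_J$ is monotone. Hence
\[
\ell(f^{n+m})\le \ell(f^n)\,\ell(f^m),
\]
and $\log\ell(f^n)$ is subadditive. Fekete's lemma then gives that $\lim \frac1n\log\ell(f^n)=\inf_n\frac1n\log\ell(f^n)$ exists, and $s\in[1,2]$ since $\ell(f)=2$ for a unimodal map.

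\emph{Upper bound $h_{\mathrm{top}}(f)\le\log s$.} I use Bowen's $(n,\varepsilon)$-separated sets. Fix $\varepsilon>0$ and let $E$ be $(n,\varepsilon)$-separated. Partition $I$ into the laps of $f^{n}$, and further into the intervals on which $f,f^2,\dots,f^{n}$ are all monotone. These are just the laps of $f^n$, because a turning point of $f^k$ with $k\le n$ is also a turning point of $f^n$ (a critical point of the unimodal map composed with later iterates). On such an interval $J$, each $f^k|_J$ is monotone. So if two points of $E\cap J$ are $\varepsilon$-separated at time $k$, then the $f^k$-image of the segment between them has length $\ge\varepsilon$. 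Ordering $E\cap J$, consecutive points are separated at some time $k\le n$. Counting, for each $k$, consecutive pairs separated at time $k$ gives at most $|I|/\varepsilon+1$ such pairs per $k$, because $f^k|_J$ is monotone and images of disjoint segments are disjoint. Thus
\[
\#(E\cap J)\le n\,(|I|/\varepsilon+1)+1 \quad\text{and}\quad \#E\le \ell(f^n)\bigl(n(|I|/\varepsilon+1)+1\bigr).
\]
Taking $\frac1n\log$ and letting $n\to\infty$ gives $h_{\mathrm{top}}\le\log s$; the polynomial factor disappears.

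\emph{Lower bound $h_{\mathrm{top}}(f)\ge\log s$.} This is the main obstacle, since many laps could have tiny images. The standard route is Misiurewicz's horseshoe lemma: if $s>1$, then for any $\lambda<s$ there exist $k$ and disjoint closed intervals $J_1,\dots,J_N$ with $N\ge\lambda^k$ and $f^k(J_i)\supset\bigcup_j J_j$. Such a horseshoe has entropy $\ge\frac1k\log N\ge\log\lambda$.

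To get it, I would fix a small $\delta$ and split the laps of $f^n$ into those whose image has length $\ge\delta$ and those shorter. Short laps lie near turning points of $f^n$, so their number can be bounded by relating them to laps of lower iterates of the turning-point orbit. This shows that most of the $\sim s^n$ laps are long. A pigeonhole argument on a finite cover of $I$ by intervals of length $\delta/3$ then yields many long laps whose $f^n$-images all cover a common interval $K$ that itself contains many of them, which gives the horseshoe.

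The case $s=1$ is trivial, since $h_{\mathrm{top}}\ge0$. Alternatively, I could cite Misiurewicz–Szlenk directly, since the paper adapts \cite{MT}, where this theorem appears.
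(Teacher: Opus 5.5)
Your first two stages are fine. Submultiplicativity $\ell(f^{n+m})\le\ell(f^n)\ell(f^m)$ with Fekete gives the limit, and also $\ell(f^n)\ge s^n$ for every $n$. Your count of $(n,\varepsilon)$-separated points inside laps of $f^n$ is the standard proof of $h_{\mathrm{top}}(f)\le\log s$. The paper proves nothing here; it only cites \cite{MS}, which is your fallback. The gap is the lower bound, which is the entire difficulty, and it has two holes.

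First, ``short laps lie near turning points of $f^n$'' gives nothing. Every lap of $f^n$ is bounded by turning points of $f^n$ or by $\partial I$. Its image is an interval between points of $w_1,\dots,w_n$ (or images of $\partial I$). So short images occur whenever two critical-orbit points are close, which is typical when the critical orbit is dense. This step can be repaired. Choose $m$ with $s^m>3$, and $\delta$ smaller than the minimal spacing of the turning points of $f^m$. If $|f^n(J)|<\delta$, then $f^{n+m}|_J$ has at most two laps. Hence $\ell(f^{n+m})\le 2\ell(f^n)+\ell(f^m)A_n(\delta)$, where $A_n(\delta)$ is the number of laps with $|f^n(J)|\ge\delta$. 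Since $\ell(f^{n+m})\ge 3\ell(f^n)$ for infinitely many $n$ (otherwise $s^m\le 3$), you get $A_n(\delta)\ge s^n/\ell(f^m)$ along a subsequence.

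The horseshoe extraction does not work as stated. Pigeonholing over a grid of $\delta/3$-intervals gives a grid interval $K_j$ containing many long laps, and a grid interval $K_i$ contained in all of their images. Nothing forces $i=j$, or even gives a bounded $q$ with $f^q(K_i)\supset K_j$. Without that you have no disjoint $J_1,\dots,J_N$ with $f^k(J_t)\supset\bigcup_u J_u$. Two examples show this:
\begin{itemize}
\item If $f$ is renormalizable of period two, with the entropy carried by disjoint intervals $C_1,C_2$ exchanged by $f$, then for odd $n$ every lap of $f^n$ inside $C_1$ is mapped into $C_2$.
\item If the long laps sit in a region $D$ with $f(D)$ contained in an invariant interval disjoint from $D$, their images never come back over $D$.
\end{itemize}
Supplying this recurrence is exactly the nontrivial part of Misiurewicz's horseshoe theorem. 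That theorem is stated in terms of $h_{\mathrm{top}}(f)$, not $s$. The version you quote, with $s$, is essentially the inequality $h_{\mathrm{top}}(f)\ge\log s$ itself, so it cannot be invoked.

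As it stands, your proposal establishes only the existence of $s$ and $h_{\mathrm{top}}(f)\le\log s$. The reverse inequality needs a complete argument or, as in the paper, the citation.
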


\begin{proof}
For a proof see \cite{MS}.
    
\end{proof}

Thus, the topological entropy measures the exponential growth rate of
the number of monotonicity intervals of the iterates.

%\subsection{Kneading determinant}

Milnor and Thurston \cite{MT} introduced a formal power series that encodes the
information contained in the kneading sequence.
Let $\varepsilon_n\in\{-1,0,1\}$ be coefficients determined by the
itinerary of the critical point. The \emph{kneading determinant} is the
formal power series
\[
D(t)=1+\sum_{n\ge1}\varepsilon_n t^n .
\]

Although this series is defined purely in terms of symbolic dynamics,
it has deep dynamical significance. In particular, the smallest
positive root of $D(t)$ determines the topological entropy.

\begin{theorem}
Let $t_0\in(0,1]$ be the smallest positive zero of $D(t)$.
Then
\[
h_{\mathrm{top}}(f)=-\log t_0 .
\]
\end{theorem}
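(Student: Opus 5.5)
Let $s$ denote the lap-number growth rate from the preceding theorem, so that $h_{\mathrm{top}}(f)=\log s$. The plan is to show that $t_0=1/s$; this immediately gives $h_{\mathrm{top}}(f)=-\log t_0$. I will follow the Milnor--Thurston route and treat the lap-number generating function as the intermediate object. Put
\[
L(t)=\sum_{n\ge1}\ell(f^n)\,t^{n-1}.
\]
By the previous theorem its radius of convergence is exactly $1/s$ when $s>1$, and it is $1$ when $s=1$, since $\ell(f^n)$ then grows subexponentially. The goal is to relate $L$ to $D$ algebraically.

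The first step is to count turning points. The laps of $f^n$ are separated by the turning points of $f^n$, namely the points $x$ with $f^k(x)=0$ for some $0\le k<n$. Let $\gamma_n$ be the number of such points. Then $\ell(f^n)=\gamma_n+1$, and the generating function of $\gamma_n$ is controlled by how often $f^j$ crosses $0$.

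For this I will use the Milnor--Thurston kneading matrix. In the unimodal case it reduces to a single invariant coordinate $\theta(x)=\sum_k\theta_k(x)t^k$, where $\theta_k$ is the signed itinerary. The kneading series is $D(t)$, written as the jump $\theta(0^+)-\theta(0^-)$ up to normalization. Summing the increments of $\theta$ over the interval $I$, together with the boundary contributions, yields the Milnor--Thurston identity
\[
L(t)\,D(t)(1-t)=\text{(a polynomial in }t\text{ with bounded coefficients)}.
\]
In the one-critical-point case this simplifies to a formula of the form
\[
\sum_{n}\gamma_n t^{n}\cdot D(t)=\frac{c(t)}{1-t},
\]
with $c$ having coefficients in $\{-1,0,1\}$. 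The details follow the cited argument in \cite{MT,MS}, and I will only adapt the sign conventions to $f_a(x)=a-|x|^r$. Since the map is orientation-preserving on $x<0$ and reversing on $x>0$, $\varepsilon_n=\prod_{j\le n}\operatorname{sign}(f^j(0))$, with $0$ when the orbit hits $0$.

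From the identity, $L(t)=c(t)/\bigl((1-t)D(t)\bigr)$ is meromorphic in $|t|<1$, because $D$ has coefficients bounded by $1$ and so is analytic there. Its poles in $|t|<1$ occur at the zeros of $D$. Since $L$ has nonnegative coefficients, Pringsheim's theorem says that its radius of convergence $1/s$ is a singularity on the positive real axis.

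The final step, which I expect to be the main obstacle, is to match that singularity with $t_0$. If $s>1$, the singularity at $1/s<1$ must be a zero of $D$. Conversely, any positive zero of $D$ below $1/s$ is impossible, because there $L$ is analytic while the identity forces a pole unless $c$ vanishes too. Excluding this cancellation is the delicate point. I would handle it with the positivity of $\theta$-increments, i.e.\ $c(t)>0$ on $(0,1)$: for real $t\in(0,1)$, $c(t)$ is the value of a monotone sum of $\theta$-jumps, which is positive. This gives $t_0=1/s$.

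If $s=1$, then $L$ is analytic in the whole disk, so $D$ has no zeros in $(0,1)$, and the convention $t_0=1$ matches $-\log t_0=0$. Throughout, I will also check that the power-law form of $f$ plays no role beyond unimodality and continuity, so that the classical proof applies verbatim.
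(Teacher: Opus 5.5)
Your outline follows the Milnor--Thurston argument, which is all the paper offers (it only cites Milnor--Thurston). However, two steps fail as written.

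\textbf{Sign convention.} You correctly note that $f_a$ preserves orientation on $x<0$. But you then set $\varepsilon_n=\prod_{j\le n}\operatorname{sign}(f^j(0))$, which assigns $-1$ to the orientation-preserving branch. The kneading coefficients must be
\[
\varepsilon_n=\prod_{j\le n}\operatorname{sign} f'(f^j(0))=\prod_{j\le n}\bigl(-\operatorname{sign} f^j(0)\bigr).
\]
Your series is therefore $D(-t)$, and for it the statement is false. Take the full map ($a=\beta$ with $\beta^{r-1}=2$, itinerary $RLLL\cdots$). The correct series is $(1-2t)/(1-t)$, which vanishes at $t=1/2$. Yours is $(1+2t)/(1+t)$, which has no positive zero at all.

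\textbf{The step you call delicate.} Positivity of $\theta$-increments does not settle it. In the Milnor--Thurston bookkeeping, the jump of $\theta$ at a point with $f^k(x)=0$ is $t^k\nu$, where $\nu=\theta(0^+)-\theta(0^-)$. The coordinate of $\nu$ you use is a fixed multiple of $D(t)$; for instance, the $R$-coordinate is $(1-t)D(t)$. So the sum of the jumps is just the left-hand side $(1-t)^2\Gamma(t)D(t)$, with $\Gamma(t)=\sum_{n\ge1}\gamma_nt^{n-1}$. Claiming this sum is positive presupposes the sign of $D$, which is circular.

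Nonvanishing must come from the other side of the identity: the difference of $\theta$ at the two endpoints of $I$, computed from the endpoint itineraries. For this family it is explicit. Work on $I=[-\beta,\beta]$ with $\beta^r-\beta=a$. The point $-\beta$ is fixed in $L$ and $f(\beta)=-\beta$, so $\theta(-\beta)=L/(1-t)$ and $\theta(\beta)=R-tL/(1-t)$. Their difference has $R$-coordinate exactly $1$. Hence
\[
(1-t)^2\,\Gamma(t)\,D(t)=1,
\]
so your $c(t)$ is exactly $t/(1-t)$, and there is no cancellation to exclude. $D\neq 0$ wherever $\Gamma$ converges, and when $s>1$ Pringsheim's theorem forces $D(1/s)=0$.

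Two smaller corrections:
\begin{itemize}
\item Your first displayed identity should carry $(1-t)^2$, and its right side is a power series with bounded coefficients, not a polynomial.
\item When the critical point is periodic of period $m$, the identity holds for the one-sided kneading series, not for your truncated polynomial. The two differ by a factor of the form $(1\pm t^m)^{-1}$, so their zeros in $|t|<1$ agree.
\end{itemize}
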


\begin{proof} For a proof see \cite{MT}.
\end{proof}

Thus the kneading determinant provides an analytic encoding of the
growth of lap numbers and therefore of the dynamical complexity of
the map.

%\subsection{Smooth dependence on parameters}

In the proof we will use that $r=2^\nu/k$, is rational. The general
case will follow from the following fact.

\begin{lemma}\label{AR}
The map
\[
(f,x)\mapsto f(x)
\]
from $C^\gamma(I,I)\times I$ into $I$ is of class $C^\gamma$.
Consequently the iterate
\[
(x,f)\mapsto f^n(x)
\]
is also of class $C^\gamma$.
\end{lemma}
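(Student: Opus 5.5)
The plan is to prove the statement first for integer $\gamma=k\ge1$ by computing the derivatives of the evaluation map $\mathrm{ev}(f,x)=f(x)$ explicitly, and then to obtain the statement about iterates from the chain rule by induction on $n$. Throughout, $C^\gamma(I,I)$ is treated as a subset (an open piece of an affine translate, or simply a subset on which derivatives are taken in the direction of $C^\gamma(I,\R)$) of the Banach space $C^\gamma(I,\R)$ with its usual norm $\|h\|_{C^k}=\max_{i\le k}\sup_I|h^{(i)}|$.

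\emph{First derivative.} Since $\mathrm{ev}$ is linear in $f$, one expects
\[
D\,\mathrm{ev}(f,x)(h,s)=h(x)+f'(x)\,s ,\qquad h\in C^k(I,\R),\ s\in\R .
\]
I would verify this from the identity
\[
(f+h)(x+s)-f(x)-h(x)-f'(x)s=\bigl[f(x+s)-f(x)-f'(x)s\bigr]+\bigl[h(x+s)-h(x)\bigr].
\]
The first bracket is $o(|s|)$ because $f\in C^1$. The second is bounded by $\|h\|_{C^1}|s|$, which is $o(\|(h,s)\|)$. The key observation is that the linear functional $\delta_x^{(i)}:h\mapsto h^{(i)}(x)$ on $C^k$ depends continuously on $x$ in operator norm whenever $i\le k-1$. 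Indeed, $\|\delta_x^{(i)}-\delta_y^{(i)}\|\le\|h\|_{C^{i+1}}|x-y|$ by the mean value theorem. This continuity is exactly what makes $D\,\mathrm{ev}$ continuous in $(f,x)$.

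\emph{Higher derivatives.} By induction on $j\le k$, I would show that $D^j\mathrm{ev}(f,x)$ is a multilinear form that is a sum of two kinds of terms:
\begin{itemize}
\item terms $h_m^{(j-1)}(x)\,s_{1}\cdots\widehat{s_m}\cdots s_j$, which are linear in exactly one function direction;
\item the term $f^{(j)}(x)\,s_1\cdots s_j$.
\end{itemize}
Terms with two or more function directions vanish because $\mathrm{ev}$ is affine in $f$. Continuity of $D^j\mathrm{ev}$ in $(f,x)$ then follows from two facts: $x\mapsto\delta_x^{(j-1)}$ is continuous into $(C^k)^*$ because $j-1\le k-1$, and $(f,x)\mapsto f^{(j)}(x)$ is continuous for $j\le k$. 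The inductive step, namely differentiating the $(j-1)$-st formula, is the same Taylor-remainder computation as above, applied to $h^{(j-2)}$ and $f^{(j-1)}$.

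\emph{Non-integer $\gamma$.} For $\gamma=k+\alpha$ with $0<\alpha<1$, one additionally needs the top derivative $D^k\mathrm{ev}$ to be $\alpha$-Hölder. This should follow from $|f^{(k)}(x)-g^{(k)}(y)|\le\|f-g\|_{C^\gamma}+\|g\|_{C^\gamma}|x-y|^\alpha$ and from the bound $\|\delta_x^{(k-1)}-\delta_y^{(k-1)}\|\le|x-y|$.

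\emph{Iterates.} For $n\ge2$ write $f^n(x)=\mathrm{ev}\bigl(f,f^{n-1}(x)\bigr)$. The map $(x,f)\mapsto(f,f^{n-1}(x))$ is $C^\gamma$ by the induction hypothesis, and the composition of $C^\gamma$ maps between Banach spaces is $C^\gamma$ (Fa\`a di Bruno). Hence $(x,f)\mapsto f^n(x)$ is $C^\gamma$.

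\emph{Main obstacle.} The delicate point is the loss of one derivative. The $f$-directional derivatives evaluate $h^{(j-1)}$, and this is continuous in $x$ only because $j-1<k$. So the bookkeeping must show that $D^j\mathrm{ev}$ never involves $h^{(j)}(x)$ for a function direction $h$. If this cannot be arranged at the Hölder endpoint, I would fall back on the weaker but sufficient statement that $\mathrm{ev}$ is $C^{\gamma'}$ for every $\gamma'<\gamma$. That weaker form is all that is needed for the continuity-in-$r$ argument used later.
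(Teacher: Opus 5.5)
Your proof is correct for $\gamma\ge 1$, but it takes a different route from the paper. The paper gives no argument of its own and simply cites the evaluation-map theorem of Abraham--Robbin, which is formulated and proved for spaces of maps between manifolds. That citation buys generality and brevity. Your direct computation buys transparency. The explicit formula
\[
D^j\mathrm{ev}(f,x)\bigl((h_1,s_1),\dots,(h_j,s_j)\bigr)=\sum_{m=1}^{j}h_m^{(j-1)}(x)\prod_{l\neq m}s_l+f^{(j)}(x)\,s_1\cdots s_j
\]
shows exactly why no regularity is lost. Because $\mathrm{ev}$ is linear in $f$, a function direction is differentiated at most $j-1\le k-1$ times, so $x\mapsto\delta_x^{(j-1)}$ is Lipschitz into $(C^k)^*$. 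Your displayed bound should read $\|\delta_x^{(i)}-\delta_y^{(i)}\|_{(C^k)^*}\le|x-y|$; as written it mixes an operator norm with $\|h\|_{C^{i+1}}$. The same estimates give the H\"older case. The fallback to $C^{\gamma'}$, $\gamma'<\gamma$, in your last paragraph is unnecessary: the bookkeeping you worried about is exactly what your formula already establishes.

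You should make two points explicit.
\begin{enumerate}
\item The iterate statement genuinely needs $\gamma\ge 1$; your Fa\`a di Bruno step uses $k\ge 1$. For $0<\gamma<1$ the statement is false. For example, $f(x)=|x|^\gamma$ maps $[-1,1]$ into itself, yet $f\circ f(x)=|x|^{\gamma^2}$ is not $\gamma$-H\"older at $0$. You should therefore state the restriction rather than leave the case $k=0$ implicit in your H\"older paragraph.
\item $C^\gamma(I,I)$ is not open in $C^\gamma(I,\R)$, and for a perturbation $f+h$ the orbit $(f+h)^{n-1}(x)$ may leave $I$. To apply the Banach-space chain rule in the iterate step, work on the open set $\{f: f(I)\subset\operatorname{int} I\}$ or extend maps past $\partial I$. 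The one-sided Taylor estimates at the endpoints of $I$ are harmless.
\end{enumerate}
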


\begin{proof} For a proof see Abraham\,\&\,Robbin \cite{abraham1967transversal}. \end{proof}

The concepts introduced above provide the combinatorial
framework for studying the dependence of kneading sequences
on the parameter $a$. In the next section we introduce the
Thurston map associated with the orbit of the critical point
and derive a key identity relating its derivative to the
parameter derivative of the critical orbit.

%\newpage

%Qur aim is to give a proof of the so called Milnor-Thurston
%monotonicity theorem in the case of powerlaw unimodal maps.

%We consider the family of maps
%$$
%f_{a}(x)=a- |x|^{r}, \quad r>1, r \in \mathbb{R}.
%$$

We have that $f_a'(x)=-r\,\text{\rm sgn}(x)|x|^{r-1}$ if $x \neq 0$
and at $x=0$,  
$f_a'(0)=0$, so the unique critical point of $f_a$ is $c=0$ (which is independent of the parameter $a$). Thus, $f_a$ is unimodal with a unique critical point $c=0$.

We define the critical orbit $w_i(a)= f_{a}^{i}(0), ~i\geq 1$.

The kneading sequence for $f_{a}$ is the infinite sequence $I(a)=\left(e_{1}, e_{2}, \ldots,\right)$ of three symbols $L, C, R$ given by
$$
e_{i}=\left\{\begin{array}{lll}
	L, & \text { if } & w_i(a)=f_{a}^{i}(0)<0, \\
	C & \text { if } & w_i(a)=f_{a}^{i}(0)=0, \\
	R & \text { if } & w_i(a)=f_{a}^{i}(0)>0.
\end{array}\right.
$$

On the set $\{L, C, R\}^{\mathbb{Z}_{+}}$, the signed
lexicographic order (Sharkovski order) on kneading sequences is as follows. Let two sequences agree up to the index $n-1$. Then, if the number of symbols $L$  in $\left\{e_{1}, e_{2}, \ldots, e_{n-1}\right)$ is even , we order $L \prec C \prec R$, if it is odd, we order $R \prec C \prec L$.
This order reflects the orientation reversals induced by a unimodal map.

%For three
%sequences ${\bf e}$, ${\bf e}_C$ and ${\bf e}'$, which coincide %in the
%symbols of order $n-1$  we define ${\bf e} \prec {\bf e}_C\prec %{\bf e}'$ if the number of
%the symbols $L \text { in }\left\{e_{1}, e_{2}, \ldots, e_{n-1
%  }\right\}$
%is even and ${\bf e}' \prec {\bf e}_C\prec {\bf e}$ if the %number of symbols $L$ is odd.

\medskip
%We turn to the proof of Proposition \ref{positivity}.

Let $f_a$ be a map from the family \eqref{family}. Fix a parameter $a_0$ and an integer $n \geq 2$ such that
 $w_{i}(a_0)=f_{a_{0}}^{i}(0) \neq 0$ for $i=1,2, \ldots, n-1$.

Fix a parameter $a_0$ and an integer $n \geq 2$ such that
$w_{n}(a_0)\neq 0$
for $i=1,2, \ldots, n-1$. Set 
$$
\omega=\left(w_{1}(a_0), \ldots, w_{n-1}(a_0)\right) \in \R^{n-1}.
$$
Define a map $T:\R^{n-1} \rightarrow \R^{n-1}$  by
\begin{equation}\label{Thurston} 
T\left(z_{1}, z_{2}, \ldots,
  z_{n}\right)=\left(c_1(z),\ldots,c_{n-1}(z)\right),
\end{equation}
where 
$$c_j=\sigma_j (z_1 -z_{j+1})^{1/r}$$
if $1\leq j\leq n-2$ and 
$$c_j=\sigma_{n-1} z_1^{1/r}$$
if $j=n-1$, with $\sigma_j=\text{\rm sgn}(w_j(a_0))$. With this choice of signs, $T$ is well-defined and smooth in a neighborhood of $\omega$ which encodes the orbit of the inverse branch $w_{j+1}=f_{a_0}^{j}(0)$.

We have the following fixed point property.
\begin{lemma}\label{fixed}
For the Thurston map \eqref{Thurston}, we have
$T(\omega)=\omega.$
\end{lemma}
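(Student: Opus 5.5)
The plan is to verify the identity $T(\omega)=\omega$ coordinate by coordinate, using only the recursion that defines the critical orbit. I read the standing hypotheses in the intended way: $w_i(a_0)\neq 0$ for $1\le i\le n-1$, and the critical point is periodic of period $n$, i.e. $w_n(a_0)=0$. The displayed hypothesis ``$w_n(a_0)\neq 0$'' must be a misprint for this, since the last coordinate of $T$ only encodes the relation $w_n=0$. The assumption $w_j(a_0)\neq0$ for $j\le n-1$ guarantees that every sign $\sigma_j=\text{\rm sgn}(w_j(a_0))\in\{-1,1\}$ is well defined. Throughout I write $w_j$ for $w_j(a_0)$.

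\textbf{Step 1 (the basic relation).} Since $w_1=f_{a_0}(0)=a_0$ and $w_{j+1}=f_{a_0}(w_j)=a_0-|w_j|^r$, we get
\[
|w_j|^r=w_1-w_{j+1}\qquad (j\ge 1).
\]
In particular $w_1-w_{j+1}\ge 0$, so the real $r$-th root in the definition of $c_j$ makes sense at $\omega$. Moreover $w_1-w_{j+1}>0$, because $w_j\neq 0$. Taking $r$-th roots gives $|w_j|=(w_1-w_{j+1})^{1/r}$. Restoring the sign then yields
\[
w_j=\sigma_j(w_1-w_{j+1})^{1/r}.
\]

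\textbf{Step 2 (coordinates $1\le j\le n-2$).} Here $j+1\le n-1$, so both $w_1$ and $w_{j+1}$ are coordinates of $\omega$. Step 1 reads exactly $c_j(\omega)=w_j$.

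\textbf{Step 3 (the last coordinate $j=n-1$).} Apply Step 1 with $j=n-1$ and use $w_n=0$. This gives $|w_{n-1}|^r=w_1$, hence $w_{n-1}=\sigma_{n-1}w_1^{1/r}=c_{n-1}(\omega)$. Combining Steps 2 and 3 gives $T(\omega)=\omega$.

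\textbf{Smoothness remark.} Every radicand $w_1-w_{j+1}$ (for $j\le n-2$) and $w_1$ (for $j=n-1$) equals $|w_j|^r>0$. By continuity the radicands stay positive in a neighbourhood of $\omega$, so $T$ is smooth there. This confirms the well-definedness claim made before the lemma.

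\textbf{Main obstacle.} The only delicate point is bookkeeping rather than analysis. One has to identify correctly that the last coordinate encodes the periodicity condition $w_n(a_0)=0$, since without it $c_{n-1}(\omega)=w_{n-1}$ fails. One must also note that the signs $\sigma_j$ are what select the correct inverse branch of $x\mapsto a_0-|x|^r$.
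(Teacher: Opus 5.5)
Your proof is correct and matches the paper's argument: both derive $w_1-w_{j+1}=|w_j|^r$ from the recursion and restore the sign through $\sigma_j$ to get $c_j(\omega)=w_j$. You are also right that the last coordinate needs $w_n(a_0)=0$, so reading the stated $w_n(a_0)\neq 0$ as a misprint is correct; the paper's ``in the same way'' uses this silently, and it matches the later hypothesis $I(a_0)=(e_1,\ldots,e_{n-1},C,\ldots)$.
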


\begin{proof}
Since 
$$w_{j+1}(a_0)=f_{a_0}(w_j(a_0))=a_0-|w_j(a_0)|^r,$$
we have
$$w_1(a_0)-w_{j+1}(a_0)=|w_j(a_0)|^r=(\sigma_j w_j)^r,$$
Taking the $r$-th root with the chosen sign we have
$$c_j(\omega)=\sigma_j (w_1-w_{j+1})^\rho=\sigma_j (\sigma_j w_j)^{r \rho}=w_j,$$
since $r \rho=1$. In the same way we get $c_{n-1}(\omega)=w_{n-1}.$

\end{proof}

%We now claim
%\begin{lemma}

%\begin{equation}\label{deteqn}  
%\frac{\left.\partial_{a}f_{a}^{h}(0)\right|_{a=a_{0}}}{D
%  f_{a_{0}}^{n-1}\left(f_{a_{0}}%(0)\right)}=\operatorname{det}\left({\mathcal
%    I}_{n}-D_{\omega} T\right),
%\end{equation}

%where ${\mathcal I}_{n-1}$ is the $(n-1) \times( n-1)$ unit matrix and
%$D_{\omega} T$ is the derivative of $T$ at $\omega$.
%\end{lemma}

We now relate the quotient between the derivative of the critical orbit with respect to the
parameter $a$ and the phase derivative at the critical value to the derivative of the Thurston map. Note that similar calculations in the quadratic case appear in Tsujii, \cite{Tsu1}.

\begin{lemma}\label{determinant} Let $f_a$ be as in  \eqref{family} and  fix a value of the parameter $a=a_0$. 
Let $T:\mathbb{R}^{\,n-1}\to\mathbb{R}^{\,n-1}$ be the Thurston map defined in 
\eqref{Thurston}. Let $D_{w}T$ be the Jacobian matrix of $T$ at $\omega$. Then 
\begin{equation}\label{derivative}
    \frac{\partial_a f_a^n(0)\big|_{a=a_0}}
{Df_{a_0}^{\,n-1}(f_{a_0}(0))}
=
\det\bigl(I_{n-1}-D_\omega T(\omega)\bigr),
\end{equation}
where $I_{n-1}$ denotes the $(n-1)\times(n-1)$ identity matrix.
\end{lemma}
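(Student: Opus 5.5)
The plan is a direct computation: write down $D_\omega T$ explicitly, rescale its rows so that the derivatives $f_{a_0}'(w_j)$ appear, and then recognise the parameter-derivative recursion of the critical orbit inside the rescaled matrix.

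I read the last coordinate as $c_{n-1}(z)=\sigma_{n-1}(z_1-w_n)^{1/r}$ with $w_n=w_n(a_0)$ held constant. This is exactly what makes Lemma \ref{fixed} hold, and in the superattracting case $w_n=0$ it is the formula in \eqref{Thurston}. The argument uses neither the value of $w_n$ nor whether it vanishes. Throughout, write $w_j=w_j(a_0)$ and $f=f_{a_0}$.

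\textbf{Step 1 (the Jacobian).} Differentiate $c_j=\sigma_j(z_1-z_{j+1})^{1/r}$ at $\omega$. By the proof of Lemma \ref{fixed}, $w_1-w_{j+1}=|w_j|^r$, so
\[
\partial_{z_1}c_j=\tfrac1r\,\sigma_j|w_j|^{1-r}=-\frac{1}{f'(w_j)},
\]
using $f'(w)=-r\,\mathrm{sgn}(w)|w|^{r-1}$. Similarly $\partial_{z_{j+1}}c_j=1/f'(w_j)$, and all other partial derivatives of $c_j$ vanish. The same computation for the last row gives $\partial_{z_1}c_{n-1}=-1/f'(w_{n-1})$ as its only nonzero entry. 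Hence row $j$ of $M_0:=I_{n-1}-D_\omega T$ is
\[
e_j+\frac{1}{f'(w_j)}e_1-\frac{1}{f'(w_j)}e_{j+1}\quad(1\le j\le n-2),
\qquad
e_{n-1}+\frac{1}{f'(w_{n-1})}e_1\quad(j=n-1).
\]

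\textbf{Step 2 (rescaling).} Multiply row $j$ by $f'(w_j)$; this is nonzero because $w_j\neq0$ for $j\le n-1$. Call the resulting matrix $M$. Then $\det M=\prod_{j=1}^{n-1}f'(w_j)\,\det M_0$, and by the chain rule the product equals $Df^{n-1}(f(0))$. It therefore suffices to prove $\det M=\partial_a f_a^n(0)|_{a=a_0}$. The rows of $M$ are
\[
f'(w_j)e_j+e_1-e_{j+1}\quad(j\le n-2),\qquad f'(w_{n-1})e_{n-1}+e_1 .
\]

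\textbf{Step 3 (the key identity).} Set $v_k=\partial_a f_a^k(0)|_{a_0}$. Differentiating $w_{k+1}=a-|w_k|^r$ in $a$ gives
\[
v_1=1,\qquad v_{k+1}=1+f'(w_k)v_k .
\]
Apply $M$ to the vector $v=(v_1,\dots,v_{n-1})^T$. For $j\le n-2$, row $j$ gives $f'(w_j)v_j+v_1-v_{j+1}=0$. The last row gives $f'(w_{n-1})v_{n-1}+1=v_n$. Thus $Mv=v_n e_{n-1}$.

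\textbf{Step 4 (Cramer's rule).} Assume first $\det M\neq0$. Cramer's rule for the first coordinate gives $1=v_1=\det M_1/\det M$, where $M_1$ is $M$ with its first column replaced by $v_ne_{n-1}$. Expanding $\det M_1$ along that column gives $(-1)^{n}v_n$ times the minor on rows $1,\dots,n-2$ and columns $2,\dots,n-1$. That minor is lower triangular with diagonal entries $-1$: the $-1$'s sit at positions $(j,j+1)$, and the $f'(w_j)$'s lie below them. So the minor equals $(-1)^{n-2}$, hence $\det M_1=v_n$ and $\det M=v_n$.

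The degenerate case $\det M=0$ is where I expect the only real care to be needed. Here I would bypass Cramer's rule: directly expanding $\det M$ along the first column yields a polynomial identity in the quantities $f'(w_j)$. Alternatively, both sides are polynomials in these quantities, treated as independent variables, and agree on the dense set where $\det M\neq0$. Either way $\det M=v_n$ always. Dividing by $Df^{n-1}(f(0))$ gives \eqref{derivative}.

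Besides that case, the main point to watch is the sign bookkeeping in Step 1, namely $\sigma_j|w_j|^{1-r}/r=-1/f'(w_j)$, which is what makes the rows match the recursion exactly.
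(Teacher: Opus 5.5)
Your proof is correct, but it reaches the identity by a different route from the paper. The paper expands $\det(I_{n-1}-D_\omega T)$ along the first row and computes it inductively. This gives the closed form $\sum_{k=0}^{n-1} 1/Df_{a_0}^{k}(f_{a_0}(0))$, which the paper then matches against the unrolled recursion $\partial_a f_a^{k+1}(0)=1+f_{a_0}'(w_k)\,\partial_a f_a^{k}(0)$.

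You instead clear denominators row by row and observe that the rows of the rescaled matrix $M$ are exactly that recursion. The parameter-derivative vector $v$ therefore solves $Mv=v_n e_{n-1}$, and the determinant comes out of a single cofactor expansion of a triangular minor.

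Each route has its advantage:
\begin{itemize}
\item The paper's computation yields an explicit series for the determinant.
\item Yours avoids the induction and its cofactor bookkeeping, and it shows why the identity holds: $v$ solves the linearised fixed-point equation for $T$, with a source term in the last coordinate.
\end{itemize}

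Your reading of the last coordinate as $\sigma_{n-1}(z_1-w_n)^{1/r}$ is also the right one. The paper's evaluation $\sigma_{n-1}\rho\, w_1^{\rho-1}=-1/Df_{a_0}(w_{n-1})$ relies on $w_1=|w_{n-1}|^r$, that is, on $w_n=0$. Your version makes the identity valid whenever $w_1,\dots,w_{n-1}\neq 0$.

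One simplification: the case split in Step 4 is unnecessary. Because $v_1=1$, replacing the first column of $M$ by $Mv=\sum_k v_k\,Me_k$ is an elementary column operation. Hence
\[
\det M_1=\det M\cdot\det[\,v\,|\,e_2\,|\,\cdots\,|\,e_{n-1}\,]=v_1\det M=\det M,
\]
with no assumption on $\det M$, and so $\det M=v_n$ holds unconditionally. If you keep the density argument instead, you should check that $\det M$ is not the zero polynomial in the variables $f_{a_0}'(w_j)$. This is easy: it equals $1$ when all of them are set to $0$.
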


\begin{proof} Let $\rho=\frac{1}{r}$ and
$$c_{j}=\sigma_{j}\left(z_{1}-z_{j+1}\right)^{\rho}, j=1,2, \dots,n-2.$$
and
$$c_{n-1}(z)=\sigma_{n-1} z_1^{\rho}.$$

%Define $c_j(z_1,z_2,\dots,z_{n-1})=\sigma_j(z_1-z_{j+1})^{\rho}$,
%$j=1,2,\dots,n-1$, and $c_{n-1}(z_1,z_2,\dots,z_{n-1})=z_1^\rho$.
%Then

We compute the Jacobian of $T$ at $\omega$. We have for $j=1,2, \ldots, n-2$ that
$$\frac{\partial c_{j}}{\partial z_{1}} (z)=\sigma_{j} \rho\left(z_{1}-z_{j+1}\right)^{\rho-1},$$
$$\frac{\partial c_{j}}{\partial z_{j+1}} (z)=-\sigma_{j} \rho\left(z_{1}-z_{j+1}\right)^{\rho -1},$$
and all other partial derivatives vanish. 

For the last coordinate we have
$$\frac{\partial c_{n-1}}{\partial z_{1}}(z)=\sigma_{n-1} \rho z_{1}^{\rho-1}.$$

We now evaluate the derivatives at $\left(z_{1}, z_{2}, \ldots, z_{n-1}\right)=\left(w_{1}, w_{2}, \ldots, w_{n-1}\right)$. 

Since $w_1-w_{j+1}=\left(\sigma_{j} z_{j}\right)^{r}$, we get $$(w_1-w_{j+1})^{\rho-1}= \left(\sigma_{j} w_{j}\right)^{r(\rho -1)} =\left(\sigma_{j} w_{j}\right)^{1-r}$$
and
$$Df_{a_0}\left(w_{j}\right) =-r\left(\sigma_{j} w_{j}\right)^{1-r}.$$

The derivatives at $z=\omega$ are
$$\left.\frac{\partial c_{j}}{\partial z_{1}}\right|_{z=\omega}=\sigma_j \rho \left(\sigma_{j} w_{j}\right)^{1-r}= -\frac{1}{D f_{a_0}\left(w_{j}\right)},$$
$$\left.\frac{\partial c_{j}}{\partial z_{j+1}}\right|_{z=\omega}=-\frac{\partial c_{j}}{\partial z_{1}}= \frac{1}{D f_{a_0}\left(w_{j}\right)},$$
and
$$\left.\frac{\partial c_{n-1}}{\partial z_{1}}\right|_{z=\omega}=\sigma_{n-1} \rho w_1^{\rho-1}= -\frac{1}{D f_{a_0}\left(w_{_{n-1}}\right)}.$$

%%%%%%%%%%%%%%%%%%%%%%%%%%%%%%%%%%%%%%%%%

%We have
%$$
%%z_{j+1}=a-\left(\sigma_{j} z_{j}\right)^{r}
%$$
%and
%$$
%\begin{aligned}
%  f_{a_0}(x)=a_0 & -|x|^{r} \\
%  	  w_{j+1}&=w_{1}-\left(\sigma_{j} w_{j}\right)^{r} \\
%	Df_{a_0}\left(w_{j}\right) & =-\sigma_{j} r\left(\sigma_{j} w_{j}\right)^{r-1} \\
%	\left.\frac{\partial c_{j}}{\partial z_{j}}\right|_{z=w}=\sigma_{j} \rho\left(w_{1}-w_{j+1}\right)^{\rho-1} & =\sigma_{j} \rho\left(\sigma_{j} w_{j}\right)^{(\rho-1) \frac{1}{\rho}} \\
%	& =\frac{1}{\sigma_{j} r\left(\sigma_{j} w_{j}\right)^{\frac{1}{\rho}-1}} 
%	=-\frac{1}{D f_{a_0}\left(w_{j}\right)}.
%\end{aligned}
%$$

%Similarly
%$$
%\begin{aligned}
%	\left.\frac{\partial c_{j}}{\partial %z_{j+1}}\right|_{z=\omega} & =-\sigma_{j}\rho\left(w_{1}-%w_{j+1}\right)^{\rho-1} =\frac{1}{D f_{a_0}\left(w_{j}\right)}.
%\end{aligned}
%$$

Then, we have
$$
 D_\omega T(\omega)=
\begin{pmatrix}
	-\frac{1}{D f\left(f_{a_0}(0)\right)} & \frac{1}{D
          f\left(f_{a_0}(0)\right)} & 0 &\ldots &\ldots & 0\\
	-\frac{1}{D f\left(f_{a_0}^2(0)\right)} & 0 &\frac{1}{D f^2\left(f_{a_0}(0)\right)} & 0 &\ldots & 0\\
        \vdots&\ddots & 0&\frac{1}{D f\left(f_{a_0}^3(0)\right)}&\ldots
        &0\\
        \vdots&\ddots&\ddots&\ldots&\ldots\\
       -\frac{1}{D f^{n-1}(\left(f_{a_0}(0)\right)}&0&\ldots&0&\dots&0 
     \end{pmatrix}\\
$$
 and

    \[
\det({ I_{n-1}} -D_\omega T(\omega)) =\begin{vmatrix}
	1+\frac{1}{D f\left(f_{a_0}(0)\right)} & -\frac{1}{D f\left(f_{a_0}(0)\right)} & 0 &\ldots &\ldots & 0\\
	\frac{1}{D f\left(f_{a_0}^2(0)\right)} & 1 &-\frac{1}{D
          f\left(f_{a_0}^2(0)\right)} & 0 &\ldots & 0\\
          \vdots&\ddots & 1&-\frac{1}{D f\left(f_{a_0}^3(0)\right)}&\ldots\\
          \vdots&\ddots&\ddots&\ldots&\ldots\\
         \frac{1}{D f^{n-1}\left(f_{a_0}(0)\right)}&0&\ldots&0&\dots&1 
          \end{vmatrix}.
\]
Thus,  $D_\omega T(\omega)$ has a triangular structure allowing explicit determinant expansion.

By expanding the determinant we obtain

$$
\det (I_{n-1,n-1}- D_\omega(T)=1+\frac{1}{Df_{a_0}}-(-1)\cdot D_{n-2},
$$
where $D_{n-2}$ is the determinant of the matrix obtained by deleting
the first row and second column. 

Differentiating the iterates with respect to $a$ yields the recursion
$${\partial_a} f_{a_0}^{n}(0)= 1+f_a'(f_a^{n-1}(0)){\partial_a} f_{a_0}^{n-1}(0). $$

Computing the determinant inductively, we get

%\begin{equation}\label{deteqn}
\begin{align}
  \det(I -D_{\omega}T(\omega))
&=1+\sum_{k=1}^{n-1}\prod_{j=1}^{k} \frac{1}{D f_{a_0}\left(f_{a_0}^{j}(0)\right)}\nonumber \\
 	&=\sum_{k=0}^{n-1} \frac{1}{D
    f_{a_0}^{k}\left(f_{a_0}(0)\right)}=\frac{\frac{\partial}{\partial
    a} f_{a_0}^{n}(0)}{D
    f_{a_0}^{n-1}\left(f_{a_0}(0)\right)}.
\end{align}
%\end{equation}
This proves our lemma.

\end{proof}

For the  case of general real  $r>1$ we will use the following
continuity argument on $r$.

%\begin{lemma}\label{cont-lemma} Let $f_{a,r}(x)=a-|x|^r$, $r>1$. Then
%as a map from $C^1$ to the space of kneading sequences
%$(e_1,\dots,e_n,\dots)$ the map
%$$
%  f_{a,r} \mapsto (e_1,\dots,e_n,\dots)
%$$
% is continuous as a function of $r$.   
%\end{lemma}

\begin{lemma}\label{cont-lemma}
Let \(f_{a,r}(x)=a-|x|^r\), with \(r>1\), and fix \(a\).
For each \(n\geq 1\), the map
\[
r\longmapsto \big(e_1(r),\dots,e_n(r)\big)
\]
is locally constant at every \(r_0>1\) such that
\[
f_{a,r_0}^j(0)\neq 0
\qquad\text{for }1\leq j\leq n.
\]

Equivalently, the kneading sequence depends continuously on \(r\) at every
parameter \(r_0\) for which the critical orbit never hits the critical point.
\end{lemma}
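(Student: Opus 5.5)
The plan is to show that each orbit point $w_j(r)=f_{a,r}^j(0)$, $1\le j\le n$, depends continuously on $r$ for fixed $a$. Once this is known, local constancy of the first $n$ symbols follows at once from the hypothesis that none of these points vanishes.

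\emph{Step 1: continuity in $r$.} I will use the joint map $F(r,x)=a-|x|^r$ on $(1,\infty)\times\R$. It is continuous, and in fact $C^1$ in $(r,x)$ away from $x=0$ and continuous at $x=0$. Indeed, $|x|^r=e^{r\log|x|}$ for $x\neq0$, and $|x|^r\to0$ as $x\to0$ uniformly for $r$ in compact subsets of $(1,\infty)$. Then I argue by induction on $j$:
\begin{itemize}
\item $w_1(r)=a$ is constant in $r$;
\item $w_{j+1}(r)=F(r,w_j(r))$ is a composition of continuous maps, hence continuous.
\end{itemize}
Alternatively, this step can be phrased through Lemma~\ref{AR}. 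The map $r\mapsto f_{a,r}$ is continuous into $C^0$ on a compact interval $I$ containing the relevant orbit segment. Lemma~\ref{AR}, in its $\gamma=0$ version, then gives continuity of $(f,x)\mapsto f^j(x)$, and composing yields continuity of $r\mapsto w_j(r)$. I prefer the direct inductive argument because it avoids specifying function spaces. The only point to check is that the orbit stays in a bounded region for $r$ near $r_0$, and that is automatic for finitely many iterates.

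\emph{Step 2: sign persistence.} By hypothesis, $w_j(r_0)\neq0$ for $1\le j\le n$. Set $\delta=\min_{1\le j\le n}|w_j(r_0)|>0$. By Step 1 there is $\eta>0$ such that $|r-r_0|<\eta$ implies $|w_j(r)-w_j(r_0)|<\delta$ for every $j\le n$. Then $\operatorname{sgn} w_j(r)=\operatorname{sgn} w_j(r_0)$, so $e_j(r)=e_j(r_0)\in\{L,R\}$ for all $j\le n$. This proves that $r\mapsto(e_1(r),\dots,e_n(r))$ is locally constant at $r_0$.

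\emph{Step 3: the equivalent formulation.} For the full sequence, I will use the product topology on $\{L,C,R\}^{\mathbb{Z}_+}$. Continuity at $r_0$ means exactly that every finite prefix is locally constant. If the critical orbit never hits $0$, Step 2 applies for every $n$, which gives continuity.

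The main (mild) obstacle is the non-smoothness of $|x|^r$ at $x=0$, together with the fact that continuity in $r$ must hold jointly with $x$. This is handled by the uniform estimate $|x|^r\le|x|$ for $|x|\le1$ and $r>1$, which gives joint continuity at $x=0$. Away from $0$, the smoothness of $e^{r\log|x|}$ takes care of everything.
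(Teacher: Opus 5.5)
Your proposal is correct and follows essentially the same route as the paper: induction on $j$ using joint continuity of $(r,x)\mapsto a-|x|^r$ to get continuity of each $w_j(r)$, then sign persistence of the first $n$ orbit points on a common neighborhood of $r_0$, and finally the description of the product topology by cylinder sets. Your explicit bound $|x|^r\le |x|$ for $|x|\le 1$, $r>1$, simply makes precise the joint continuity at $x=0$ that the paper takes for granted.
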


\begin{proof}

Write
$w_j(r)=f_{a,r}^j(0),~ j\geq 1.$ For each fixed \(j\), the function
$r\longmapsto w_j(r)$
is continuous.
Indeed, \(w_1(r)=f_{a,r}(0)=a\) is independent of \(r\). If \(w_j(r)\)
is continuous, then
$$w_{j+1}(r)=f_{a,r}(w_j(r))=a-|w_j(r)|^r.$$

Since the map $(r,x)\longmapsto |x|^r$
is continuous on \((1,\infty)\times \mathbb R\), it follows that
\(w_{j+1}(r)\) is continuous. Hence, by induction, each \(w_j(r)\) is
continuous.

Now fix \(n\geq 1\) and suppose that
$w_j(r_0)\neq 0 ~\text{for }1\leq j\leq n.$

Since each \(w_j\) is continuous and nonzero at \(r_0\), there exists
\(\delta_j>0\) such that
$\operatorname{sgn}(w_j(r))=\operatorname{sgn}(w_j(r_0))$
whenever \(|r-r_0|<\delta_j\).

Let
$\delta=\min_{1\leq j\leq n}\delta_j.$ Then, for \(|r-r_0|<\delta\), all signs
$\operatorname{sgn}(w_1(r)),\dots,\operatorname{sgn}(w_n(r))$ are unchanged. Therefore
$$(e_1(r),\dots,e_n(r))=(e_1(r_0),\dots,e_n(r_0)).$$
Thus the finite kneading word of length \(n\) is locally constant at \(r_0\).

If the critical orbit never hits the critical point, then this holds for every
\(n\), which is precisely continuity in the product topology on the space of
kneading sequences.
\end{proof}

We now prove continuity of topological entropy with respect to $a$.

\begin{theorem}
Fix \(r>1\). Let $f_a$ be a map from the family \ref{family}.
be considered on a parameter interval \(J\) for which \(f_a\) is a unimodal
self-map of a compact interval \(I\). Then the function
\[
a\longmapsto h_{\mathrm{top}}(f_a)
\]
is continuous on \(J\).
\end{theorem}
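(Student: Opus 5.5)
The plan is to use the lap-number characterization $h_{\mathrm{top}}(f)=\log s$, $s=\lim_n \ell(f^n)^{1/n}$, and prove lower and upper semicontinuity of $a\mapsto h_{\mathrm{top}}(f_a)$ separately. Throughout, $f_a$ is a unimodal self-map of $I$ for $a\in J$, and $(a,x)\mapsto f_a(x)$ is jointly continuous. Hence each iterate $f_a^n$ depends continuously on $a$ in $C^0(I)$, and by Lemma \ref{AR} also in $C^1$ away from the critical point.

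\emph{Lower semicontinuity.} I would invoke the Misiurewicz--Szlenk horseshoe characterization: if $h_{\mathrm{top}}(f_{a_0})>\lambda>0$, there are $n$ and disjoint closed intervals $K_1,\dots,K_m\subset I$ with $m\ge e^{n\lambda}$ and $f_{a_0}^n(K_i)\supset \bigcup_j K_j$ strictly, with some room to spare. This covering condition is stable under $C^0$-small perturbations of $f^n$, so it persists for $a$ near $a_0$. That yields $h_{\mathrm{top}}(f_a)\ge \lambda$ nearby. This direction holds for continuous interval maps in general and needs no special structure.

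\emph{Upper semicontinuity.} This is the main obstacle, since it fails for general $C^0$ families and one must use unimodality. Since $\ell(f^{n+m})\le \ell(f^n)\ell(f^m)$, the sequence $\log\ell(f^n)$ is subadditive. Hence $h_{\mathrm{top}}(f)=\inf_n \frac1n\log \ell(f^n)$, and it suffices to show that each $a\mapsto \ell(f_a^n)$ is upper semicontinuous, because an infimum of upper semicontinuous functions is upper semicontinuous. For a unimodal map, the turning points of $f_a^n$ are exactly the points $x$ with $f_a^j(x)=0$ for some $0\le j<n$, i.e.\ preimages of $0$ of order less than $n$. Each preimage branch of $0$ under $f_a$ is given explicitly by $x=\pm(a-y)^{1/r}$, which is continuous in $(a,y)$ where $a-y\ge0$. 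As $a$ varies, preimages can therefore only be created at the boundary, where $a-y=0$, i.e.\ when some $f_a^j(0)$ crosses $0$. Near $a_0$, a pair of turning points can be born only at such a bifurcation, and that occurs on one side only; they cannot appear out of nowhere in an open set. The cleanest way to make this precise is: if $f_{a_0}^n$ has laps $J_1,\dots,J_\ell$, choose points $x_0<\dots<x_\ell$ at which $f^n_{a_0}$ alternates strictly in the monotonicity pattern. This gives $\ell(f_a^n)\ge \ell(f_{a_0}^n)$ for $a$ near $a_0$, which is lower semicontinuity of laps, the wrong direction.

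So for the upper bound I would instead pass through the kneading determinant. For $|t|<1$, $D_a(t)$ depends continuously on $a$ locally uniformly, at least at parameters where the finite kneading word is locally constant, by the argument of Lemma \ref{cont-lemma} with $a$ in place of $r$. At parameters where $f_{a_0}^j(0)=0$, the one-sided limits of the kneading sequence are the two admissible expansions, and both give the same smallest zero $t_0$. Hurwitz's theorem then gives continuity of the smallest zero on $(0,1)$, hence continuity of $h_{\mathrm{top}}=-\log t_0$ where it is positive. Combined with the lower semicontinuity above, which covers the points where the entropy is zero, this gives continuity on $J$. The delicate points are the periodic-critical parameters and the uniform convergence of $D_a$ near $|t|=1$.
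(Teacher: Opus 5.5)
\textbf{The gap is at zero-entropy parameters.} The horseshoe step for lower semicontinuity is fine, and the determinant part follows the paper's route. The final assembly, however, breaks at parameters with zero entropy. If $h_{\mathrm{top}}(f_{a_0})=0$, lower semicontinuity only gives $\liminf_{a\to a_0}h_{\mathrm{top}}(f_a)\ge 0$, which is automatic. Continuity there means $\limsup_{a\to a_0}h_{\mathrm{top}}(f_a)\le 0$. That is upper semicontinuity, the direction you yourself flag as the hard one, and your determinant argument was stated only for $t_0\in(0,1)$. This is not a degenerate case: it is exactly where continuity has content, at zero-entropy parameters accumulated by positive-entropy ones (the boundary of chaos), and your proof says nothing there. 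Separately, the claim that the two one-sided limits at a periodic-critical parameter give the same smallest zero is asserted without proof.

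\textbf{Repairing the zero-entropy case.} Take $\rho<t_0$, with $t_0=1$ when the entropy vanishes. Then $D_{a_0}$ is real and nonzero on $[0,\rho]$, so uniform convergence on $[0,\rho]$ gives $D_a\neq 0$ on $[0,\rho]$ for $a$ near $a_0$. By Milnor--Thurston, $e^{-h_{\mathrm{top}}(f_a)}$ is a zero of $D_a$ whenever $h_{\mathrm{top}}(f_a)>0$, so $h_{\mathrm{top}}(f_a)\le -\log\rho$. This is upper semicontinuity at every $a_0$. Combined with your horseshoe bound it gives continuity, and you need neither Hurwitz (whose perturbed zeros need not be real) nor any control near $|t|=1$.

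\textbf{Justifying the one-sided claim.} Let $m$ be minimal with $f_{a_0}^m(0)=0$. Near $0$ one has $f_a^m(x)=\phi_a(a-|x|^r)$, with $\phi_a=f_a^{m-1}$ a local diffeomorphism. Since $r>1$, the $f_a^m$-orbit of $0$ stays on the side of $\operatorname{sgn} f_a^m(0)$ for all $a$ in a two-sided neighbourhood of $a_0$. Hence the kneading sequence of $f_a$ there is $(e_1\cdots e_{m-1}X)^\infty$ with $X\in\{L,C,R\}$. Its determinant is one of $P(t)$, $P(t)/(1-t^m)$, $P(t)/(1+t^m)$, where $P$ is the polynomial of degree $m-1$ fixed by $e_1,\dots,e_{m-1}$. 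All of these have the same zeros in $|t|<1$, so the entropy is in fact locally constant near $a_0$.

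\textbf{Comparison with the paper.} The paper's proof simply asserts $D_a\to D_{a_0}$ uniformly even at precritical parameters, which is false as stated, so it needs this computation too. Your horseshoe step is a genuine addition: it gives the lower bound directly and avoids the real-versus-complex zero issue that the paper's ``convergence of smallest positive zeros'' glosses over.
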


\begin{proof}
For \(n\geq 1\), write $w_n(a)=f_a^n(0)$
for the \(n\)-th point of the critical orbit. Since the map
\[
(a,x)\longmapsto f_a(x)
\]
is continuous jointly in $a$ and $x$, it follows inductively that each function $a\longmapsto w_n(a)$
is continuous.

Let \(a_0\in J\). We prove continuity of entropy at \(a_0\).
Suppose first that
\[
w_j(a_0)\neq 0
\qquad\text{for }1\leq j\leq N.
\]

Then, by continuity, the signs of
\[
w_1(a),\dots,w_N(a)
\]
are constant for all \(a\) sufficiently close to \(a_0\). Hence the first
\(N\) entries of the kneading sequence of \(f_a\) agree with those of
\(f_{a_0}\).
Thus, as \(a\to a_0\), the kneading sequences converge to the kneading
sequence of \(f_{a_0}\) in the product topology, except possibly at
parameters for which the critical orbit lands on the critical point.

We now consider such a precritical parameter. Suppose that
\[
w_m(a_0)=0
\]
for some \(m\geq 1\), and assume \(m\) is minimal with this property. Then
the kneading sequence has the form
\[
I(a_0)=(e_1,\dots,e_{m-1},C,\dots).
\]

For \(a\) close to \(a_0\), the first \(m-1\) symbols remain fixed, while
the \(m\)-th symbol may be \(L\), \(C\), or \(R\). The one-sided kneading
sequences therefore converge to the same kneading data with the critical relation
\[
f_{a_0}^m(0)=0.
\]

Let \(D_a(t)\) denote the kneading determinant associated with \(f_a\).

By the preceding discussion, for every \(0<\rho<1\),
\[
D_a(t)\longrightarrow D_{a_0}(t)
\]
uniformly on \(|t|\leq \rho\) as \(a\to a_0\).

By the Milnor--Thurston kneading formula, if \(t(a)\in(0,1]\) denotes the
smallest positive zero of \(D_a(t)\), then

\[
h_{\mathrm{top}}(f_a)=-\log t(a).
\]

The locally uniform convergence of the kneading determinants implies convergence of their smallest positive zeros:
\[
t(a)\longrightarrow t(a_0).
\]
Therefore
\[
h_{\mathrm{top}}(f_a) =
-\log t(a)
\longrightarrow
-\log t(a_0)
=
h_{\mathrm{top}}(f_{a_0}).
\]

Since \(a_0\in J\) was arbitrary, the map
\[
a\longmapsto h_{\mathrm{top}}(f_a)
\]
is continuous on \(J\).

\end{proof}

\newpage

\section{The Torelli space and the contraction argument}
%%The proof of the monotonicity will be based on the following statement.

%\bigskip
% \begin{lemma}. Consider two different powerlaw unimodal  mappings
% for which the 
% critical point is contained in a periodic orbit of period $n$. If
% these two have the same kneading invariant, then they are linearly
% conjugate to each other.
% \end{lemma}

 This section will be based on complex variable methods and, 
in particular, on Teichmüller theory.  Those methods will apply only in the case
$n \geq 3$. However, our arguments in cases $n=1$ and $n=2$ are special
and the main proposition, which leads to the monotonicity result,
Proposition \ref{positivity}, is  easy to verify separately.

We establish the contraction property that underlies Proposition~1.
The argument follows the approach of Milnor and Thurston and uses tools from complex
analysis and Teichmüller theory. The contraction property implies that the derivative of
the Thurston map has spectral radius strictly less than one, which guarantees the
positivity of the determinant appearing in equation (3.2). For the
present argument that $r=2^\nu/k$, with $\nu\geq 1$ and $k$ odd.

\medskip

The determinant identity obtained in Lemma 3 reduces the monotonicity problem to proving positivity of the determinant
$\det(I-DT(\omega)).$
Thus the problem becomes a spectral one: we must show that the derivative of the Thurston operator has spectral radius strictly smaller than one.

In the quadratic family, this phenomenon is understood through Thurston’s pullback argument for critically finite branched coverings. The essential idea is that inverse branches induce a self-map on a suitable Teichmüller space and that this map is strictly contracting in the Teichmüller metric.

In our setting, however, the situation is substantially more delicate because the maps
$$f_a(x)=a-|x|^r$$
are not polynomial unless r is an even integer. In particular, for non-integer critical exponents there is no globally defined algebraic extension to the Riemann sphere. Consequently, several classical tools from polynomial dynamics becomes unavailable in our setting.

Our strategy is therefore to isolate the local inverse-branch structure generated by the critical orbit and to construct a pullback mechanism directly from the corresponding branched coverings. The resulting operator behaves analogously to the classical Thurston pullback map and provides sufficient rigidity to establish strict contraction.

The key point is that positivity of the determinant in Lemma 3 follows once one proves that the derivative of the pullback operator has spectral radius strictly smaller than one. Thus the entire monotonicity problem is reduced to a geometric problem in Teichmüller theory.

This approach allows us to extend the Milnor–Thurston monotonicity principle beyond polynomial dynamics and into the substantially broader setting of power-law criticality.

\medskip

Let $n \geq 3$. We consider configurations of $n$ distinct points on the Riemann sphere modulo affine transformations.
Define 
$$M_{n}= \{ \left(z_{1},
\ldots, z_{n}\right) \in \C^n: z_i \neq z_j \ {\rm for }\ i \neq j \}/ {\rm Aff} (\C), $$
where the affine group acts by
$$
z_{j} \longmapsto \alpha z_{j}+\beta
$$
with $\alpha \neq 0$.

The quotient by affine transformations removes the natural conjugacy ambiguity that arises in one-dimensional dynamics. Indeed, affinely conjugate unimodal maps determine equivalent critical orbit data.

The space $M_{n}$ is a complex manifold of dimension ($n-2$), which is not simply connected.  Its  universal covering space $\tilde{M}_{n}$, can be
identified with the Teichmüller space of the sphere punctured at $n+1$ points, see Patterson
\cite{MR357864} and Nag \cite{MR620255}. This identification provides a natural geometric structure on the parameter space of marked critical orbits and allows the use of quasiconformal methods. 

%Recall our family of maps \eqref{family}.

We now associate configurations in $M_{n}$ with periodic critical orbits of power-law unimodal maps. Consider a map
\begin{equation}\label{powerlaw}
q(z)=z_1 -|z-c|^{r},
\end{equation}
whose critical point $c$ has exactly period $n$. Define
$$z_{j}=q^{j}(c), ~ j=1,\ldots,n.$$
The orbit relation becomes 
\begin{equation}
z_{j+1}=z_1 -|z_j-c|^{r},
\end{equation}

After choosing branches of the inverse map, this may be rewritten as 
\begin{equation}\label{eq:inverse_relations}
z_{j}-c=  \sigma_j (z_1 -z_{j+1})^{1/r},
\end{equation}
where each $\sigma_j \in \{ \pm 1 \}$.

These inverse relations define a multivalued correspondence on the configuration space. After fixing branches consistently in a neighbourhood of a given configuration, one obtain a locally holomorphic map $F:M_{n} \rightarrow M_{n}$.

The fixed points of $F$ correspond precisely to periodic critical orbits of the original dynamical system.

The fixed points of $F$ are precisely the points $\left(z_{1},
\ldots, z_{n}\right)$ associated with powerlaw unimodal maps having a
superattractive period $n$ orbit, note that if $z_{j}=q^{j}(c)$, then
setting $q(z)=-|z-c|^{r} +z_{1}$ we will have $$q^{-1}(w)= \pm
 ({w-z_{1}})^{1/r}+c.$$

The key observation is that configurations coming from periodic critical orbits are precisely the fixed points of \(F\).
\begin{lemma}
Let \((z_1,\dots,z_n)\) be defined by $z_j = q^j(c)$,
where \(q\) is given by \eqref{powerlaw} and \(c\) has period \(n\). Then
\[
F(z_1,\dots,z_n) = (z_1,\dots,z_n).
\]
\end{lemma}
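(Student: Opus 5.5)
The plan is to argue exactly as in Lemma \ref{fixed}: verify the inverse relations \eqref{eq:inverse_relations} coordinate by coordinate on the orbit configuration. We must also check that the branch choices defining $F$ near $(z_1,\dots,z_n)$ are the ones realised by the orbit, and that the affine normalisation in $M_n$ causes no trouble.

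\textbf{Setting up.} First make the definition of $F$ explicit near the orbit configuration. Write $\rho=1/r$ and choose the signs $\sigma_j=\operatorname{sgn}(z_j-c)$ for $j=1,\dots,n-1$; these are nonzero because $c$ has exact period $n$, so $z_j\neq c$ for $j<n$. Using these signs, define the lift of $F$ by
\[
F(z_1,\dots,z_n)_j=c+\sigma_j\,(z_1-z_{j+1})^{\rho}\qquad (1\le j\le n-1),
\]
with the last coordinate fixed by the normalisation $z_n=c$. Since $z_1-z_{j+1}=|z_j-c|^r>0$ on the orbit, the principal real branch of $w\mapsto w^{\rho}$ is holomorphic in a neighbourhood of this positive real number. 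Hence $F$ is a well-defined holomorphic map near the configuration, as required in its definition.

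\textbf{Coordinate check.} From the orbit relation $z_{j+1}=z_1-|z_j-c|^r$ we get
\[
z_1-z_{j+1}=|z_j-c|^r=(\sigma_j(z_j-c))^r .
\]
Taking the positive $r$-th root gives $(z_1-z_{j+1})^{\rho}=\sigma_j(z_j-c)$. Multiplying by $\sigma_j$ and using $\sigma_j^2=1$ yields $c+\sigma_j(z_1-z_{j+1})^{\rho}=z_j$ for $1\le j\le n-1$. For the remaining coordinate, periodicity gives $z_n=q^n(c)=c$, which matches the normalisation. The relation $z_1=q(c)=z_1-0$ is automatically consistent. This is precisely the computation in Lemma \ref{fixed}, carried out with $c$ in place of $0$.

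\textbf{Passing to $M_n$.} Finally, descend to the quotient by ${\rm Aff}(\C)$. If the configuration is replaced by $\alpha z_j+\beta$, the map $q$ is replaced by its affine conjugate. For real $\alpha>0$ this is again of the form \eqref{powerlaw}, with the constant rescaled. The branch relations transform equivariantly, so the fixed-point property holds on equivalence classes.

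\textbf{Main obstacle.} The real issue is not the algebra but well-definedness. The inverse relations are multivalued, and we must show that the branch of $(\cdot)^{1/r}$ selected in the local definition of $F$ is the real positive one determined by the orbit signs $\sigma_j$. This holds because each $z_1-z_{j+1}$ is a positive real number and the branches are fixed in a neighbourhood of this configuration. With any other branch the fixed-point identity would fail.
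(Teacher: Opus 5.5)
Your argument is correct and is essentially the paper's proof. The paper likewise reads off $z_1-z_{j+1}=|z_j-c|^r$ from the orbit relation and takes the $r$-th root with $\sigma_j(z_j-c)\ge 0$ to recover \eqref{eq:inverse_relations}, and your observation that $z_1-z_{j+1}>0$ pins down the real branch is a welcome addition.

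Your ``Passing to $M_n$'' paragraph is unnecessary, since once $F$ is defined on classes, checking one representative suffices. It is also slightly inaccurate as written. The affine conjugate of $q$ is $\tilde z_1-\alpha^{1-r}|w-\tilde c|^r$, which is not of the form \eqref{powerlaw}. Moreover, $F$ applied to $\alpha z+\beta$ returns $\tilde c+\alpha^{1/r}(z_j-c)$ rather than $\alpha z_j+\beta$, so $F$ respects the affine action only up to a change of representative, not equivariantly.
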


\begin{proof}
Since
\[
z_{j+1} = q(z_j) = z_1 - |z_j - c|^r,
\]
we get
\[
z_1 - z_{j+1} = |z_j - c|^r.
\]
Choosing \(\sigma_j\) so that \(\sigma_j(z_j-c)\ge 0\), we have
\[
\sigma_j (z_1 - z_{j+1})^{1/r} = z_j - c.
\]
Thus the inverse relations \eqref{eq:inverse_relations} are satisfied, and it follows that \(F(z_1,\dots,z_n)=(z_1,\dots,z_n)\).
\end{proof}

Therefore, the map \(F\) encodes the inverse dynamics of power-law unimodal maps, and its fixed points correspond precisely to configurations arising from periodic critical orbits. This identification allows us to study the dynamics via the induced map on configuration space.

Since $F$ is locally holomorphic (after fixing branches), it lifts to
a holomorphic map $\tilde{F}:\tilde{M}_{n} \rightarrow \tilde{M}_{n}$
fixing a lift in the configuration. We here use the fact that $r$ is
of the form $2^\nu/k$, $\nu\geq 1$, $k$ odd.

Consequently, the dynamical problem becomes a fixed-point problem on modulii space. This reinterpretation is fundamental because the geometry of Teichmüller spaces provides strong metric rigidity properties which are not available at the level of interval dynamics.

\medskip

%We now define a many-valued, locally biholomorphic mapping $F:M_{n} %\rightarrow M_{n}$, by prescribing inverse branches of the dynamics. More %precisely, for a configuration  $\left(z_{1},
%\ldots, z_{n}\right)$ we define
%$$
%F\left(z_{1}, \ldots, z_{n}\right)=\left( \sigma_2 ({z_{1}-z_{2}})^{1/r},
%\ldots, \sigma_n (z_{1}-{z_{n}})^{1/r}, 0\right),
%$$
%where each \(\sigma_j\in\{\pm 1\}\) is chosen so that the inverse relations
%\begin{equation}
%z_j = \sigma_{j+1}(z_1 - z_{j+1})^{1/r} + c
%\label{eq:inverse_relations}
%\end{equation}
%are satisfied along the orbit.

%where the signs $\sigma_j \in \{ \pm 1 \}$ are chosen so that %the inverse branches are compatible with the real dynamics of %the critical orbit 
%\begin{equation}\label{eq:backward}
%z_{j}= \sigma_{j+1}  ({z_{1}-z_{j+1}})^{1/r}+c \text { for } j<n,
%\end{equation}
%and $z_{n}=a \cdot 0+c$, as required. The $\sigma_j$ are chosen %as $+$ or
%$-$ depending on the sign of the sequence $\{z_j\}$ and so that %\eqref{eq:backward} holds.

Fix an integer $n \geq 3$. Let $P=\{z_1,\ldots, z_n,\infty\}$ be a finite set of distinct marked points on the Riemann sphere. Let $S=\tilde{\C} \backslash P$ be the punctured sphere associated with the marked set $P$. 

A Beltrami differential on $S$ is a measurable tensor of the form
$$\mu(z) \dfrac{d {\bar z}}{dz}$$ satisfying $||\mu||_{\infty} <1$.

By the measurable Riemann mapping theorem, every Beltrami differential determines a quasiconformal deformation of the complex structure  of $S$.
Two Beltrami differentials are considered equivalent if the corresponding quasiconformal maps are isotipic relative to the punctured set $P$.  The Teichmüller space $T(P)$ is the space of equivalent classes of Beltrami differentials modulo this equivalence relation.

The Teichmüller metric is defined by
\[d_T([\mu],[\nu])=\frac12\inf\log K(h),\]
where the infimum is taken over all quasiconformal maps $h$ in the corresponding isotopy class and \[K(h)=\sup_z K(z,h)\]
denotes maximal dilatation.

One of the central facts in Teichmüller theory is that holomorphic self-maps of
Teichmüller space are non-expanding in the Teichmüller metric. This property is
the analytic mechanism underlying the contraction argument used below.
The tangent space to $T(P)$ at a point may be identified with equivalence classes of Beltrami differentials modulo infinitesimally trivial deformations. The cotangent space is naturally identified with the space of integrable holomorphic quadratic differentials on the punctured sphere. This duality between Beltrami differentials and quadratic differentials plays a fundamental role in the proof of strict contraction.

\medskip

We now construct the Pullback Operator. Let $\mu$ represent a point in the Teichmüller space $T(P)$. We define the pullback Beltrami differential by
\[q^{*}\mu=(\mu\circ q)\frac{\overline{q_z}}{q_z}.\]
Away from the branch locus this is the standard pullback formula for complex
iterations. Because the branch set is finite, the pullback remains measurable and bounded. Consequently, \[\|q^{*}\mu\|_{\infty}<1,\]
and therefore $q^{*}\mu$ again determines a quasiconformal structure.

Solving the Beltrami equation associated with $q^{*}\mu$ produces a new marked
Riemann surface. This construction defines a map
\[\sigma_q:T(P)\to T(P),\] called the pullback operator associated with $q$.

The operator $\sigma_q$ depends only on the isotopy class of the branched covering determined by the inverse branches of the power-law map.

The pullback operation $\mu \mapsto q^{\star} \mu$ depends complex linearly on $\mu$. Moreover, solutions of the Beltrami equation depend holomorphically on the Beltrami coefficient in the Teichmüller topology. Therefore, the induced map on equivalence classes is holomorphic.

Hence $\sigma_q$ defines a holomorphic self-map of Teichmüller space.
The significance of this construction is that fixed points of $\sigma_q$ correspond precisely to dynamically compatible complex structures for the power-law system. Thus the original dynamical problem becomes equivalent to the study of the fixed-point geometry of a holomorphic self-map of Teichmüller space.

We now describe the pullback construction more explicitly in local coordinates.
Suppose that \[q:U\to V\] is a branched covering between domains in the Riemann sphere. Let \[\mu(z)\frac{d\overline z}{dz}\] be a Beltrami differential on $V$. The pullback differential on $U$ is defined by
\[(q^{*}\mu)(z)=\mu(q(z))\frac{\overline{q'(z)}}{q'(z)}.\]
Since \[\left|\frac{\overline{q'(z)}}{q'(z)}\right|=1,\]
it follows immediately that \[\|q^{*}\mu\|_{\infty}=\|\mu\|_{\infty}.\]
Thus the pullback preserves boundedness of Beltrami coefficients.

The branch points require special consideration. However, because the branch set is finite and has measure zero, the pullback formula still defines a measurable Beltrami differential on the entire surface.
Let
\[
w^{\mu}:\widehat{\mathbb C}\to\widehat{\mathbb C}
\]
denote the normalized quasiconformal solution of the Beltrami equation
\[\partial_{\overline z}w^{\mu}=\mu\,\partial_z w^{\mu}.\]

The pullback differential $q^{*}\mu$ therefore determines a new normalized solution
\[w^{q^{*}\mu}.\] This construction induces the pullback operator
\[\sigma_q([\mu])=[q^{*}\mu]\] on Teichmüller space.

A fundamental property of this operator is holomorphicity.

\begin{lemma}
The pullback operator $\sigma_q:T(P)\to T(P)$ is holomorphic.
\end{lemma}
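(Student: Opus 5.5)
The plan is to follow the standard argument that Thurston's pullback map is holomorphic (as in Douady--Hubbard). We factor $\sigma_q$ through the space of Beltrami coefficients, using the holomorphic projection $\Phi:\mathrm{Belt}(\widehat{\C})_1\to T(P)$ that sends $\mu$ to its class $[\mu]$. Here $\mathrm{Belt}(\widehat{\C})_1$ is the open unit ball of $L^\infty$ Beltrami coefficients, and it carries the complex Banach manifold structure inherited from $L^\infty$. We then invoke two standard facts. First, by the Bers embedding, $T(P)$ is a complex manifold and $\Phi$ is a holomorphic split submersion. Second, by the Ahlfors--Bers theorem, the normalized solution $w^\mu$ depends holomorphically on $\mu$: for each fixed $z$, the map $\mu\mapsto w^\mu(z)$ is holomorphic. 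Since $\Phi$ is a submersion, a map $G$ out of $T(P)$ is holomorphic as soon as $G\circ\Phi$ is holomorphic. So it suffices to show that $\mu\mapsto \Phi(q^*\mu)$ is holomorphic on the ball.

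\textbf{Step 1.} We show that $q^*:\mu\mapsto (\mu\circ q)\,\overline{q_z}/q_z$ is a bounded complex-linear map of $L^\infty$ into itself and that it preserves the unit ball. Complex linearity is immediate, because $\mu$ enters linearly and $\overline{q_z}/q_z$ is a fixed unimodular function. Measurability holds off the finite branch set, which has measure zero. The computation already given shows $\|q^*\mu\|_\infty=\|\mu\|_\infty$. A bounded linear map is holomorphic, so $q^*$ is a holomorphic self-map of the ball.

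\textbf{Step 2.} We check that $\Phi\circ q^*$ descends, i.e.\ that $\mu\sim\nu$ implies $q^*\mu\sim q^*\nu$. Suppose $w^\mu$ and $w^\nu$ differ by a map isotopic to the identity relative to $P$. Because $q$ maps $P$ into $P$ and is a covering away from $q^{-1}(P)$, we can lift this isotopy through $q$. The lift gives an isotopy relative to $q^{-1}(P)\supseteq P$ between the integrating maps of $q^*\mu$ and $q^*\nu$. With $\Phi\circ q^*=\sigma_q\circ\Phi$ established, holomorphicity of $\sigma_q$ follows from the holomorphicity of $\Phi\circ q^*$ (Step 1 composed with the holomorphic map $\Phi$) and the submersion property.

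\textbf{Main obstacle.} The hard part is Step 2, and through it Step 1, for non-integer $r$. Here $z\mapsto|z-c|^r$ is not holomorphic on $\C$, so $q$ is not a genuine branched covering of the sphere. The symbols $q_z$ and $\overline{q_z}/q_z$ must instead be interpreted via the local holomorphic branches $\sigma_j(z_1-w)^{1/r}$ used to define $F$. We would handle this as follows. Using $r=2^\nu/k$, we replace $q$ by the map $z\mapsto z_1-\big((z-c)^2\big)^{r/2}$, with $(r/2)=2^{\nu-1}/k$. This map is realized as the composition of $z\mapsto(z-c)^{2^\nu}$ with a $k$-th root, taken on a finite branched cover (the Riemann surface of the $k$-th root) that carries the marked points. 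The pullback is then defined there as a genuine branched-covering pullback, followed by pushforward through the degree-$k$ covering. The pushforward step is where most of the care is needed. One must average the Beltrami coefficient invariantly under the deck group $\mathbb{Z}/k$; this averaging is again complex linear and norm nonincreasing. One must also verify that the lifting of isotopies in Step 2 survives this detour. If the averaging fails to be well defined, the fallback is to work directly on the Teichmüller space of that cover. We would then use the holomorphic inclusion of the symmetric locus to transfer holomorphicity back to $T(P)$.
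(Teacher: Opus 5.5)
Your Steps~1--2 are the standard Douady--Hubbard argument. They are a more careful version of the paper's own proof, which consists only of the complex-linearity of $\mu\mapsto q^*\mu$ plus Ahlfors--Bers, and never checks that the map descends to Teichm\"uller classes. For $k=1$, i.e.\ $r=2^\nu$ with $q$ replaced by the polynomial $z_1-(z-c)^{2^\nu}$, we have $q(P)\subset P$ and the critical values $z_1,\infty$ lie in $P$, so your argument is complete. For $k>1$, the only case where the lemma goes beyond the classical setting, there is a genuine gap. You locate it correctly, but your repair does not close it. The paper's proof has the same gap without acknowledging it: $z\mapsto z_1-|z-c|^r$ is real-valued on $\C$, so $(\mu\circ q)\overline{q_z}/q_z$ is not a pullback of complex structure at all.

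The holomorphic extension of $q$ is the correspondence $(z_1-w)^k=(z-c)^{2^\nu}$, i.e.\ $w=z_1-s$, $s^k=u$, $u=(z-c)^{2^\nu}$. Pulling back along $u=(z-c)^{2^\nu}$ is harmless. Descending from the $s$-sphere to the $u$-sphere, however, requires $\mu$ to be invariant under the rotations $R_j(w)=z_1+\zeta^j(w-z_1)$, $\zeta=e^{2\pi i/k}$, and this fails at a generic point of $T(P)$. The average $A\mu=\frac1k\sum_j R_j^*\mu$ is linear and norm-nonincreasing on $L^\infty$, but it does not descend to $T(P)$, for three reasons. First, your Step~2 works only because the integrating map of $q^*\mu$ is a lift of $w^\mu$ through a covering, so isotopies lift; $w^{A\mu}$ is not obtained from $w^\mu$ by lifting through anything. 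Second, each $R_j$ with $j\neq 0$ moves $z_2,\dots,z_n$, so $R_j^*\mu\sim R_j^*\nu$ holds only relative to the different sets $R_j^{-1}(P)$. Third, Teichm\"uller equivalence is a nonlinear relation (only its infinitesimal version is linear), so convex combinations of equivalent coefficients need not be equivalent. Even a well-defined averaged operator would not be a pullback along the dynamics, hence would not be $\sigma_q$, and its holomorphy would not serve the purpose the lemma has later.

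The fallback fails for the same reason. The structure you get on the $k$-th root cover lies in the $\mathbb{Z}/k$-symmetric locus only when $\mu$ is $R_1$-invariant. The inclusion of that locus maps the quotient's Teichm\"uller space into the cover's, not back down. Returning to $T(P)$ would need a holomorphic, dynamically natural retraction onto the symmetric locus, which is exactly the missing ingredient. So for $k>1$, neither your proposal nor the paper actually defines a pullback operator $T(P)\to T(P)$, let alone proves it holomorphic.
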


\begin{proof}
The pullback operation \[\mu\mapsto q^{*}\mu\]
depends complex-linearly on $\mu$. Moreover, solutions of the Beltrami equation
depend holomorphically on the Beltrami coefficient in the Teichmüller topology.
Therefore the induced map on equivalence classes is holomorphic.

\end{proof}

The holomorphicity of $\sigma_q$ is crucial because Teichmüller space possesses
strong geometric rigidity properties for holomorphic self-maps. In particular,
holomorphic maps are non-expanding in the Teichmüller metric, and strict contraction can often be deduced from the geometry of the associated branched covering.

We emphasize that the pullback operator constructed here is analogous to the classical Thurston pullback map, see  Douady--Hubbard \cite{DH}, for postcritically finite rational maps. The essential difference is that in the present setting the inverse branches arise from power-law criticality rather than from globally polynomial dynamics. Nevertheless, the local geometry of the inverse branches is sufficiently rigid to allow the Teichmüller-theoretic machinery to remain applicable.

\medskip

We now prove that the pullback operator is non-expanding with respect to the
Teichmüller metric.

\begin{proposition}\label{contraction}

The pullback operator
$$\sigma_q:T(P)\to T(P)$$
satisfies
$$d_T(\sigma_q(x),\sigma_q(y)) \leq d_T(x,y)$$
for all $x,y\in T(P)$.
\end{proposition}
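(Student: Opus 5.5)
The plan is to prove the non-expansion directly from the definition of $d_T$, by lifting an almost-extremal quasiconformal map through $q$. Royden's theorem on the Kobayashi metric would also give it, but the direct route exposes the local geometry of the branched covering, which we need later. Fix $x=[\mu]$, $y=[\nu]$ in $T(P)$ and let $\varepsilon>0$. By definition of $d_T$ there is a quasiconformal map $h$ between the marked surfaces $(\widehat{\mathbb C},P)$ with structures $\mu$ and $\nu$, in the correct isotopy class relative to $P$, with $\tfrac12\log K(h)\le d_T(x,y)+\varepsilon$. Equivalently, $h=w^{\nu}\circ\phi\circ(w^{\mu})^{-1}$ with $\phi$ isotopic to the identity rel $P$.

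The first step is to lift $h$ through the branched covering $q$. This means producing a quasiconformal $\tilde h$ with $w^{\nu}\circ q\circ\tilde h$ agreeing (up to the normalization) with $h\circ w^{\mu}\circ q$ on the structures $q^*\mu$, $q^*\nu$, and with $\tilde h$ in the isotopy class rel $P$ determined by $\sigma_q(x)$ and $\sigma_q(y)$. We lift the isotopy $\phi_t$ from $\phi$ to the identity rel $P$ via the homotopy lifting property of the covering $q$ away from the branch locus. This is legitimate because the critical values of $q$ lie in $P$; here $z_1=q(c)$ is marked, and $c$ itself is in the orbit, hence in $P$. An isotopy fixing $P$ therefore fixes the branch values and lifts to an isotopy fixing $q^{-1}(P)\supset P$. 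The lift of the identity-end must be the identity, and this is where the choice of consistent branches for $r=2^\nu/k$ enters. We fix the branch of $(\cdot)^{1/r}$ on the universal cover used to define $\tilde F$, so that the local degree structure near $c$ is that of a genuine branched covering whose monodromy is compatible with the lift.

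The second step is to compare dilatations. Since $q$ is conformal off a finite set (holomorphic in the chosen branch), the lifted map satisfies pointwise $K(z,\tilde h)=K(q(z),h)$, by the formula $q^*\mu=(\mu\circ q)\overline{q'}/q'$ and the identity $\|q^*\mu\|_\infty=\|\mu\|_\infty$ noted above. The finite branch set has measure zero, so $K(\tilde h)\le K(h)$. Hence
\[
d_T(\sigma_q(x),\sigma_q(y))\le \tfrac12\log K(\tilde h)\le \tfrac12\log K(h)\le d_T(x,y)+\varepsilon,
\]
and letting $\varepsilon\to 0$ gives the proposition.

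The main obstacle is the first step for non-integer $r$. There $|z-c|^r$ is not holomorphic on $\mathbb C$, so $q$ must be replaced by the holomorphic branched covering $z\mapsto z_1-\big((z-c)^2\big)^{r/2}$ on a suitable Riemann surface (a finite cover when $r=2^\nu/k$). We then need to check that the pullback is well defined on the punctured sphere $\widehat{\mathbb C}\setminus P$, i.e.\ that the monodromy around $c$ and $\infty$ is trivial on the relevant isotopy classes. I would try first to treat $q$ as a composition of the genuine power maps $z\mapsto z^{2^\nu}$ and $z\mapsto z^{1/k}$ between finitely punctured spheres, lifting the isotopy through each factor separately. If this fails, the fallback is to invoke the holomorphicity lemma for $\sigma_q$ proved above together with Royden's theorem that $d_T$ equals the Kobayashi metric, so that any holomorphic self-map is $1$-Lipschitz.
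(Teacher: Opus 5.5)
Your route is the paper's: lift a (near-)extremal map $h$ through $q$ and use $\|q^*\mu_h\|_\infty=\|\mu_h\|_\infty$. The $\varepsilon$-version and the isotopy-lifting details (critical values in $P$, $P\subset q^{-1}(P)$) improve on the paper. For $k=1$ they give a complete proof, provided you complexify with the polynomial $q(z)=z_1-(z-c)^{2^\nu}$. Note that $|z-c|^r$ is real-valued, so it is not holomorphic for any $r$, integer or not.

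The genuine gap is the step you flag yourself, the case $k>1$, and neither of your remedies closes it.
\begin{itemize}
\item \emph{The pullback is branch-dependent.} Continuing $(z-c)^r$ once around $c$ multiplies it by $e^{2\pi i r}=e^{2\pi i 2^\nu/k}\neq 1$. So two branches of $q$ differ by post-composition with the rotation $R(w)=z_1+e^{2\pi i r}(w-z_1)$. Hence $q^*\mu$ depends on the branch unless $R^*\mu=\mu$. A lift $\tilde h$ built with a branch cut jumps across the cut unless $h$ commutes with $R$. The monodromy condition you intend to check therefore fails at generic points of $T(P)$.
\item \emph{Factoring through power maps does not help.} On the uniformizing sphere $z-c=t^k$ the correspondence becomes $t\mapsto z_1-t^{2^\nu}$. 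You can pull structures and isotopies back through $t\mapsto t^{2^\nu}$. To return to the $z$-sphere, however, you must push them down through $t\mapsto t^k$. That needs equivariance under $t\mapsto e^{2\pi i/k}t$, which the lifted objects lack.
\item \emph{The Royden fallback has no map to apply to.} It requires $\sigma_q$ to be a genuine holomorphic self-map of $T(P)$. The holomorphicity lemma you would cite presupposes exactly the holomorphic covering that is missing. The natural candidate, the lift of $z_j\mapsto \sigma_j(z_1-z_{j+1})^{\rho}$ to $\tilde M_n$, is not a self-map when $k>1$. Take a configuration with $z_1-z_{i+1}=e^{2\pi i 2^\nu/k}(z_1-z_{j+1})$. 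Winding $z_{i+1}$ around $z_1$ makes $(z_1-z_{i+1})^{\rho}/(z_1-z_{j+1})^{\rho}$ take every value $e^{2\pi i mk/2^\nu}$, $m\in\mathbb Z$, in particular $\pm 1$. So two image points collide and the image leaves the configuration space.
\end{itemize}

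The paper's proof does not supply the missing ingredient either; it simply asserts that $q$ is holomorphic off the branch locus. Your argument therefore proves the proposition for $r=2^\nu$ only. For $k>1$, the operator $\sigma_q$ being estimated has not been shown to exist.
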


\begin{proof}
Let $x,y\in T(P)$. Choose an extremal quasiconformal map $h:S_x\to S_y$
realizing the Teichmüller distance between $x$ and $y$.

Recall that the Teichmüller distance is defined by
$$d_T(x,y)=\frac12\log K(h),$$
where $K(h)$ denotes the maximal dilatation of $h$.

We now lift the map $h$ through the branched covering associated with the
power-law inverse branches. This produces a lifted quasiconformal map
$\widetilde h:\widetilde S_x\to \widetilde S_y.$

Away from the branch locus, the covering map $q$ is holomorphic. Since
holomorphic changes of coordinates preserve infinitesimal eccentricity,
the local quasiconformal dilatation of the lifted map coincides with that
of the original map.

More precisely, if
$$\mu_h=\frac{\partial_{\overline z}h}{\partial_z h}$$
denotes the Beltrami coefficient of $h$, then the Beltrami coefficient
of the lifted map is given by the pullback formula
$$q^{*}\mu_h=(\mu_h\circ q)\frac{\overline{q_z}}{q_z}.$$

Since
$$\left|\frac{\overline{q_z}}{q_z}\right|=1,$$
we get
$$\|q^{*}\mu_h\|_{\infty}=\|\mu_h\|_{\infty}.$$

As a consequence, the maximal dilatation satisfies
$$K(\widetilde h)\leq K(h).$$
Taking logarithms gives
$$\frac12\log K(\widetilde h)\leq\frac12\log K(h).$$
Since the Teichmüller distance is obtained by minimizing logarithmic
dilatation over isotopy classes, it follows that
$$d_T(\sigma_q(x),\sigma_q(y)) \leq d_T(x,y).$$
This proves the proposition.
\end{proof}

The geometric meaning of this lat proposition is that the pullback operator cannot
increase quasiconformal distortion. Thus the dynamics induced by the inverse
branches acts as a contracting mechanism on the moduli of marked complex
structures. This non-expansion property is the fundamental analytic input in the proof of monotonicity. The remaining task is to prove that the inequality is in fact strict. This will imply that the derivative of the pullback operator
has spectral radius strictly smaller than one at every fixed point.

\medskip

In the next proposition we exclude the possibility of equality in Proposition \ref{contraction}.

\begin{proposition}\label{strictlycontraction}

The pullback operator
$$\sigma_q:T(P)\to T(P)$$
satisfies
$$d_T(\sigma_q(x),\sigma_q(y)) < d_T(x,y)$$
for all $x,y\in T(P)$, whenever $x \neq y$.
\end{proposition}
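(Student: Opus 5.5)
The argument will run through the duality between Beltrami differentials and integrable holomorphic quadratic differentials, following the standard strict-contraction proof for Thurston's pullback map (Douady--Hubbard \cite{DH}). By Proposition \ref{contraction} we already have non-expansion. Strict inequality will follow once we show that the coderivative of $\sigma_q$ at every point is strictly norm-decreasing. The Teichmüller metric is the integrated Finsler metric whose infinitesimal form on the cotangent space is the $L^1$ norm on quadratic differentials. So if $\|d\sigma_q^*\|<1$ pointwise, integrating along a Teichmüller geodesic from $x$ to $y$ gives
\[
d_T(\sigma_q(x),\sigma_q(y))<d_T(x,y)
\]
for $x\neq y$. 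This also needs compactness of the geodesic segment, so that the pointwise strict inequality becomes a uniform one along it.

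First, we identify the coderivative. At $\tau\in T(P)$ the cotangent space is the space $Q(S_\tau)$ of integrable holomorphic quadratic differentials on $\widehat{\mathbb C}\setminus P$, with at most simple poles at $P$. The adjoint of $\mu\mapsto q^*\mu$ under the pairing $\langle \phi,\mu\rangle=\int\phi\mu$ is the pushforward (Thurston's operator)
\[
(q_*\phi)(w)=\sum_{q(z)=w}\frac{\phi(z)}{q'(z)^2}.
\]
Here we use that $q$, in its complexified form $z\mapsto z_1-(z-c)^r$ with $r=2^\nu/k$, lifts as a branched covering between finite-type surfaces. The change of variables $w=q(z)$ and the triangle inequality give $\|q_*\phi\|_{L^1}\le\|\phi\|_{L^1}$. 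This recovers Proposition \ref{contraction} infinitesimally.

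Second, we show the inequality is strict. Equality in the triangle inequality forces all summands $\phi(z)/q'(z)^2$ over a common fibre to have the same argument almost everywhere. This makes $q_*\phi/\phi\circ q\cdot (q')^2$ real and positive, hence constant by holomorphy on a connected open set. One then shows that such a $\phi$ must be the pullback $q^*\psi$ of its own image. Since $q$ has local degree $r>1$ at the critical point $c$, the pulled-back differential acquires a pole of order at least $2$ at $c$, or at some preimage of a marked point that is not itself marked. That contradicts integrability, exactly as in the Douady--Hubbard proof that $\|T_f\|<1$ unless $f$ is a Lattès-type map. Lattès-type maps are excluded here because the postcritical set has a single finite critical value $z_1$ and the puncture at $\infty$.

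The main obstacle is the second step for non-integer $r$. There, $q$ is not a global branched cover of the sphere, and the ``fibre'' and the ``degree'' must be interpreted on a finite cover. The plan is to use $r=2^\nu/k$ and substitute $z-c=u^k$. This turns the relation into $z_1-w=u^{2^\nu}$, which is a genuine polynomial branched covering in the $u$-coordinate. We then run the equality analysis there, checking that the extra branch points introduced by the substitution lie over $P$ so that integrability is preserved. Finally, the compactness/uniformity step along geodesics is needed to pass from $\|d\sigma_q^*\|<1$ to strict inequality of distances. It is routine, since $\tau\mapsto\|d\sigma_q^*(\tau)\|$ is continuous and the geodesic segment between $x$ and $y$ is compact.
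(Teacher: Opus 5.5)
You correctly identify non-integer $r$ as the crux, but the proposed fix fails, and this is a genuine gap. Write $\pi_1(u)=c+u^k$ and $\pi_2(u)=z_1-u^{2^\nu}$. The substitution shows that $w=z_1-(z-c)^{2^\nu/k}$ is the correspondence $\pi_2\circ\pi_1^{-1}$, which is $k$-valued, not a branched self-covering of $(\widehat{\mathbb C},P)$. A Beltrami coefficient $\mu$ on the $w$-sphere pulls back by $\pi_2$ to the $u$-sphere. It descends by $\pi_1$ to the $z$-sphere, where $T(P)$ lives, only if it is invariant under $u\mapsto e^{2\pi i/k}u$. In general $\pi_2^*\mu$ is invariant only under $2^\nu$-th roots of unity, and $\gcd(k,2^\nu)=1$, so this construction does not define $\sigma_q$ on $T(P)$. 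Dually, each $z_j$ ($j<n$) has the $k$ distinct images $z_1-\zeta(z_j-c)^{2^\nu/k}$, $\zeta^k=1$, and only the real one is $z_{j+1}$. So $P$ is not forward invariant, and $(\pi_2)_*\pi_1^*$ does not map $Q(\widehat{\mathbb C}\setminus P)$ into itself. The problem is therefore not where the branch points of $u\mapsto u^k$ lie (over $c$ and $\infty$, both in $P$). It is that the other $k-1$ branches send marked points to unmarked ones. Hence your first step (coderivative equals a norm-nonincreasing pushforward on $Q(\widehat{\mathbb C}\setminus P)$) is unavailable, and ``local degree $r$'' has no meaning. The paper's proof does not fill this either, since it lifts an extremal map through the same branched covering, which does not exist for $k>1$. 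As it stands, your argument covers only $r=2^\nu$.

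Even for $r=2^\nu$, your equality analysis is backwards at $c$. Suppose $\psi$ has at most a simple pole at $z_1=q(c)$ and $q$ has local degree $m\ge 2$ at $c$. Then $q^*\psi$ has order $\ge m-2\ge 0$ at $c$, and similarly at $\infty$. Critical points remove poles rather than create them. The contradiction must come entirely from unramified, unmarked preimages, and you have to show they are forced. Equality gives $\phi=\lambda^{-1}q^*\psi$ with $\psi=q_*\phi$ and $\lambda>0$. Set $d=2^\nu$. Each $z_{j+1}$, $1\le j\le n-1$, has $d-1$ unramified preimages besides $z_j$. None of them is marked, since a marked one $z_i$ would give $z_{i+1}=z_{j+1}$. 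So $\psi$ can have poles only at $z_1$ and $\infty$. A nonzero quadratic differential on $\widehat{\mathbb C}$ has divisor of degree $-4$, so it needs at least four poles counted with multiplicity. Hence $\psi=0$ and $\phi=0$. With this repair, your infinitesimal route ($\|q_*\|<1$ at every point, then integration along a compact Teichm\"uller geodesic) is a sound alternative to the paper's global argument. That argument proceeds by contradiction, lifting an extremal map and invoking Teichm\"uller's uniqueness theorem, and it rests on the same pole count.
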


\begin{proof}
Suppose by contradiction that 
$$d_T(\sigma_q(x),\sigma_q(y)) = d_T(x,y).$$
By Teichmüller's uniqueness theorem, the extremal quasiconformal map realizing the distance between $x$ and $y$ is generated by an integrable quadratic differential $\varphi(z)dz^2$ on the punctured sphere. The equality $d_T(\sigma_q(x),\sigma_q(y)) = d_T(x,y).$ implies that the lifted quasi-conformal map must again be extremal. As a consequence of this, the pullback construction must again define an integrable structure.

More precisely, the Beltrami coefficient of the extremal map has the form
$$\mu(z)=k\frac{\overline{\varphi(z)}}{|\varphi(z)|},$$
for $0<k<1.$
Equality in the non-expansion estimate implies that the lifted map obtained
through the branched covering must again be extremal. Consequently, the
pullback quadratic differential must again determine an extremal Teichmüller
map.

We now analyze the behavior of the quadratic differential under pullback.
The branched covering associated with the power-law inverse branches introduces
additional preimages of punctures under the covering map. Some of these
preimages correspond to ordinary points of the covering surface rather than
marked punctures.
Pulling back the quadratic differential therefore creates additional poles
away from the puncture set.

However, integrable holomorphic quadratic differentials on punctured spheres
may possess at most simple poles at punctures. Indeed, the integrability
condition
$$\int |\varphi(z)|\,dA < \infty$$
excludes poles of higher order and excludes poles at ordinary interior points.
Thus the pullback differential cannot remain integrable.

This contradicts the assumption that the lifted quasiconformal map is again
extremal. Therefore equality cannot occur.

Hence the pullback operator is strictly distance decreasing.

\end{proof}

The strict contraction property is the fundamental geometric ingredient in the
proof of positivity of the determinant appearing in Lemma~\ref{determinant}.

It implies that the pullback operator has no neutral directions in
Teichmüller space.

Let $\omega$ denote a fixed point of the pullback operator. Since $\sigma_q$ is holomorphic and strictly contracting, the infinitesimal Schwarz lemma implies that the derivative
$D\sigma_q(\omega)$  has operator norm strictly smaller than one in the Teichmüller metric.

Consequently, $\|D\sigma_q(\omega)\|<1.$
In finite dimensions this immediately implies that the spectral radius
satisfies
$$\rho(D\sigma_q(\omega))<1,$$
where $\rho$ denotes the spectral radius.

Hence every eigenvalue $\lambda$ of $D\sigma_q(\omega)$ satisfies
$|\lambda|<1.$
Therefore the matrix $I-D\sigma_q(\omega)$
is invertible.

Moreover,
$$\det(I-D\sigma_q(\omega))>0.$$

Indeed, all eigenvalues of $D\sigma_q(\omega)$ lie strictly inside the unit
disk, and therefore every factor
$1-\lambda$ has positive real part. Consequently the determinant cannot vanish, and since  the complex eigenvalues appear in complex conjugate pairs, we conclude that it  must be positive.

This positivity property is the crucial analytic ingredient required in the
transversality argument.

\begin{corollary}
Let $\omega$ be the fixed point associated with a periodic critical orbit.
Then
$$\det(I-D\sigma_q(\omega))>0.$$

\end{corollary}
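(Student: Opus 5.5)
The plan is to combine the strict contraction of Proposition \ref{strictlycontraction} with an infinitesimal version of the non-expansion argument, obtaining a spectral bound on $D\sigma_q(\omega)$. Positivity of the determinant will then follow from a purely linear-algebraic factorization over eigenvalues. I will work throughout with the coderivative, since on the cotangent space the relevant norm is the $L^1$ norm of integrable quadratic differentials.

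\textbf{Step 1 (derivative has norm strictly less than one).} I will not deduce $\|D\sigma_q(\omega)\|<1$ directly from pointwise strict contraction, since a map can be strictly distance decreasing and still have derivative of norm one. Instead I identify the coderivative of $\sigma_q$ at $\omega$ with the push-forward (trace) operator $q_*$ on integrable quadratic differentials on $S=\widehat{\mathbb C}\setminus P$. The triangle inequality gives $\|q_*\varphi\|_{L^1}\le\|\varphi\|_{L^1}$, which is the infinitesimal form of Proposition \ref{contraction}. Equality for some $\varphi\neq 0$ would force the local terms in the trace to have aligned arguments almost everywhere. That alignment is precisely the rigidity excluded in the proof of Proposition \ref{strictlycontraction}: the pulled-back differential would acquire non-integrable poles off $P$. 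The space of integrable quadratic differentials has dimension $n-2$, so its unit sphere is compact. The supremum of $\|q_*\varphi\|/\|\varphi\|$ is therefore attained, and hence it is strictly less than $1$. Dualizing gives $\|D\sigma_q(\omega)\|<1$ in the Teichmüller norm, so every eigenvalue satisfies $|\lambda|<1$.

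\textbf{Step 2 (positivity of the real determinant).} Since $\sigma_q$ is holomorphic, $A=D\sigma_q(\omega)$ is complex linear.

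\begin{itemize}
\item If the determinant is taken as a real determinant on the underlying real space, then $\det_{\mathbb R}(I-A)=|\det_{\mathbb C}(I-A)|^2$. This is strictly positive by Step 1, because $1$ is not an eigenvalue of $A$.
\item If instead $\det$ refers to the matrix $D_\omega T(\omega)$ of Lemma \ref{determinant}, I will use that $\omega$ is a real configuration and that the branches $\sigma_j(z_1-z_{j+1})^{1/r}$ of $F$ restricted to $\mathbb R^{n-1}$ agree with $T$. Then $D\sigma_q(\omega)$, read in the affine coordinates on $M_n$ and lifted to $\tilde M_n$, is the complexification of the real matrix $D_\omega T(\omega)$. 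Its eigenvalues are therefore those of $D_\omega T(\omega)$, all in the open unit disk. Now $\det(I-D_\omega T)=\prod(1-\lambda)$: each real $\lambda$ contributes $1-\lambda>0$, and each nonreal conjugate pair contributes $|1-\lambda|^2>0$.
\end{itemize}

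Either way the determinant is positive.

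\textbf{Main obstacle.} The main obstacle is Step 1, and specifically the equality case: I must show that $\|q_*\varphi\|=\|\varphi\|$ forces $\varphi=0$ for the non-polynomial branched structure coming from $r=2^\nu/k$. I would first try the classical Thurston argument. Equality implies $q^*q_*\varphi=d\cdot\varphi$, with $d$ the local degree. Comparing pole orders, $q^*$ of a simple pole at $z_1=q(c)$ produces a pole of order $r\cdot 1-(r-1)\cdot 2$ exponent behaviour at $c$ (more precisely, exponent $r-2$ in the local coordinate), and at the non-marked preimages of the points of $P$ it produces poles that are not permitted. Together these contradict integrability unless $\varphi\equiv 0$.

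A secondary point is making precise the identification of $D\sigma_q(\omega)$ with the complexification of $D_\omega T(\omega)$. I would handle this by noting that both maps are given by the same inverse-branch formulas in the coordinates $(z_1,\dots,z_{n-1})$ normalized by $c=0$.
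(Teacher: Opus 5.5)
Your skeleton is the paper's: contraction, then spectrum in the open unit disk, then $\det(I-A)=\prod(1-\lambda)>0$ via a real structure. But Step 1 has a genuine gap at the equality case, which is where all the content sits.

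You exclude $\|q_*\varphi\|=\|\varphi\|$ by borrowing the mechanism from the proof of Proposition~\ref{strictlycontraction}: the pulled-back differential would acquire non-integrable poles off $P$. Nothing non-integrable arises, however. Pulling back a simple pole through an unramified branch gives a simple pole, which is integrable. Near $c$ one gets $q^*\bigl(dw^2/(w-z_1)\bigr)=-r^2(z-c)^{r-2}\,dz^2$, which is locally integrable since $r-2>-2$. What equality actually yields is $q^*q_*\varphi=d\,\varphi$. Any contradiction must then come from holomorphy of $\varphi$ off $P$ together with a global count. It cannot come from a local argument, because for rational maps equality genuinely occurs (the $(2,2,2,2)$, Latt\`es-type case). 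Moreover, for $k>1$ the map $z\mapsto z_1-(z-c)^{2^\nu/k}$ is a correspondence, not a branched covering. So neither $q_*$ nor the identification of the coderivative of $\sigma_q$ with $q_*$ is available off the shelf. Finally, you cannot fall back on the statement of Proposition~\ref{strictlycontraction} for the infinitesimal inequality. That proposition is about finite distances, and, as you note yourself, such a statement does not bound $\|D\sigma_q(\omega)\|$.

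The repair is to aim only for the spectral bound. This does follow from Proposition~\ref{strictlycontraction} used as a black box, exactly as the paper uses it. Closed $d_T$-balls in the finite-dimensional complete space $T(P)$ are compact, and $g_m(x)=d_T(\sigma_q^m(x),\omega)$ is non-increasing in $m$. If its limit $L$ were positive, a limit point $y$ of $\sigma_q^m(x)$ would satisfy $d_T(\sigma_q(y),\sigma_q(\omega))=d_T(y,\omega)=L$, contradicting strict contraction. By Dini's theorem $\sigma_q^m\to\omega$ locally uniformly. Cauchy estimates in a chart at $\omega$ then give $D\sigma_q(\omega)^m=D(\sigma_q^m)(\omega)\to 0$, hence spectral radius $<1$, with no need for $\|D\sigma_q(\omega)\|<1$.

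Step 2 is sound in substance, and more careful than the paper, which invokes conjugate pairs without exhibiting the real structure. It needs one correction. $D\sigma_q(\omega)$ acts on an $(n-2)$-dimensional space, so it cannot be the complexification of the $(n-1)\times(n-1)$ matrix $D_\omega T(\omega)$. Since $T(\alpha z)=\alpha^{1/r}T(z)$ for $\alpha>0$, one has $D_\omega T(\omega)\,\omega=\omega/r$. Under your identification, $D\sigma_q(\omega)$ is the complexification of the map induced on $\R^{n-1}/\R\omega$. Hence $\det(I-D_\omega T(\omega))=(1-1/r)\det(I-D\sigma_q(\omega))$, and the sign is unaffected.
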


\begin{proof}

By the strict contraction property, every eigenvalue of
$D\sigma_q(\omega)$ has modulus strictly smaller than one. Therefore
$1-\lambda \neq 0$ for every eigenvalue $\lambda$.

Hence
$$\det(I-D\sigma_q(\omega))=\prod_{\lambda}(1-\lambda)>0.$$

\end{proof}

\medskip

\section{ Monotonicity of topological entropy}

In this section we prove monotonicity of topological entropy with respect to the parameter for the family of maps \eqref{family}
combining the results obtained in the previous sections.

Recall the family of unimodal maps \eqref{family}.
The critical point is $c=0$. The kneading sequence
$I(a)=(e_1,e_2,\ldots)$
records the itinerary of the critical value.

%\subsection{Derivative identity}

Let $a_0$ be a parameter value such that
\[
I(a_0)=(e_1,\ldots ,e_{n-1},C,\ldots).
\]

Define
\[
w_i=f_{a_0}^i(0), \qquad i=1,\ldots ,n-1,
\]
and
\[
\omega=(w_1,\ldots ,w_{n-1}).
\]

By Proposition \ref{fixed}, the Thurston map $T$ satisfies
$T(\omega)=\omega$.

Furthermore, we have from Lemma \ref{determinant} that
\[
\frac{\partial_a f_a^n(0)|_{a=a_0}}
{Df_{a_0}^{\,n-1}(f_{a_0}(0))}
=
\det(I-DT(\omega)).
\]

%\subsection{Positivity}

By the contraction property, the derivative $DT(\omega)$ has spectral radius strictly
less than one. Therefore the matrix $I-DT(\omega)$ is invertible and
\[
\det(I-DT(\omega))>0.
\]

\begin{proposition}\label{positivity}
 If $I\left(a_{0}\right)=\left(e_{1}, \ldots, e_{n-1}, C, \ldots\right)$ and $e_{i} \neq C$ for $1 \leq i \leq n-1$, then

\begin{equation}\label{positivity:eq}
	\frac{\partial_{a}f_{a}^{n}(0)\big|_{a=a_0}}{D f_{a_{0}}^{n-1}\left(f_{a_{0}}(0)\right)}>0. 
\end{equation}
\end{proposition}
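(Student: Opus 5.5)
The plan is to reduce the inequality \eqref{positivity:eq} to the determinant identity of Lemma~\ref{determinant} and then to the spectral bound from Section~3. The hypothesis $e_i\neq C$ for $1\le i\le n-1$ means $w_i(a_0)\neq 0$ for $i\le n-1$, so the signs $\sigma_j$ are well defined. The Thurston map $T$ of \eqref{Thurston} is therefore smooth near $\omega=(w_1,\dots,w_{n-1})$, and Lemma~\ref{fixed} gives $T(\omega)=\omega$. Note that $e_n=C$ means $w_n(a_0)=0$, so the critical point is periodic of period $n$. This is exactly the superattracting situation of Section~3, where $\omega$ corresponds, after the affine normalization defining $M_n$, to a fixed point of $F$. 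Lemma~\ref{determinant} then rewrites the left side of \eqref{positivity:eq} as $\det(I_{n-1}-D_\omega T(\omega))$. So it suffices to show this determinant is positive.

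The cases $n=1,2$ I would do by hand. For $n=1$ the quotient is just $\partial_a f_a(0)=1$. For $n=2$, Lemma~\ref{determinant} gives $1+1/Df_{a_0}(w_1)$, and $w_2=0$ means $a_0=|a_0|^r$, i.e. $a_0=1$. Then $Df_{a_0}(w_1)=-r$, so the quantity is $1-1/r>0$ since $r>1$.

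For $n\ge3$, first take $r=2^\nu/k$. I would identify $D_\omega T(\omega)$ with the derivative of the holomorphic lift $\tilde F$, i.e. of the pullback operator $\sigma_q$, at the fixed point corresponding to the periodic critical orbit. The coordinates $z_1,\dots,z_{n-1}$, with $z_n=c=0$ normalized, give local holomorphic coordinates on $M_n$ near $\omega$. In these coordinates $F$ is precisely the real-analytic formula defining $T$, extended holomorphically. The universal covering $\tilde M_n\to M_n$ is a local biholomorphism, so $D\tilde F$ at the lifted fixed point is conjugate to $D_\omega T(\omega)$ and has the same eigenvalues. Proposition~\ref{strictlycontraction} and the corollary following it then give spectral radius $<1$. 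Since $D_\omega T(\omega)$ is a real matrix, its eigenvalues come as real $\lambda\in(-1,1)$, contributing $1-\lambda>0$, or as conjugate pairs, contributing $|1-\lambda|^2>0$. Hence the determinant is positive.

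For general $r>1$ I would argue by continuity, approximating $r$ by $r_m=2^{\nu_m}/k_m\to r$. The step I expect to be the main obstacle is that a superattracting parameter $a_0$ for $r$ must be matched by nearby superattracting parameters $a_m$ for $r_m$ with the same combinatorics $(e_1,\dots,e_{n-1},C)$. To get this, I would use the fact that $w_n(a,r)=f^n_{a,r}(0)$ is jointly continuous ($C^1$ away from the critical point, by Lemma~\ref{AR}). If $\partial_a w_n(a_0,r)\neq0$, the implicit function theorem yields zeros $a_m\to a_0$, and Lemma~\ref{cont-lemma} keeps the first $n-1$ symbols unchanged. The quotient in \eqref{positivity:eq} is continuous in $(a,r)$ and positive at $(a_m,r_m)$, so the limit is $\ge0$. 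Strictness requires nonvanishing. Assuming it vanished, I would argue that the quotient is exactly the determinant $\det(I-DT)$, which depends continuously on $r$. I would then try to obtain a uniform bound on the spectral radius along the approximating sequence, for instance via compactness of the finitely many configurations with given combinatorics and bounded geometry. If the derivative vanished, $w_n(\cdot,r)$ would still change sign across $a_0$ as a limit of sign-changing functions, and I would try to exclude degeneracy that way instead.
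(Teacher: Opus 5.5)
For $n=1,2$, and for $n\ge 3$ with $r=2^\nu/k$, your argument is the paper's. Lemma~\ref{determinant} converts the quotient into $\det(I_{n-1}-D_\omega T(\omega))$, and the strict contraction of Section~3 on the Torelli space gives spectral radius $<1$. Your pairing of real eigenvalues in $(-1,1)$ with conjugate pairs is the right way to read off the sign for the real matrix.

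There is one slip in your identification step. With $z_n=0$, the variables $z_1,\dots,z_{n-1}$ are \emph{not} local coordinates on $M_n$, which has dimension $n-2$, because scaling is still free. So $D\tilde F$ cannot be conjugate to the $(n-1)\times(n-1)$ matrix $D_\omega T(\omega)$. Since $T(\alpha z)=\alpha^{1/r}T(z)$, the line $\C\omega$ is invariant under $D_\omega T(\omega)$ with eigenvalue $1/r$, and $DF$ is the induced map on $\C^{n-1}/\C\omega\cong T_{[\omega]}M_n$. The correct relation is $\det(I_{n-1}-D_\omega T(\omega))=(1-1/r)\det(I-DF)$; your $n=2$ value $1-1/r$ is exactly this extra factor. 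Positivity survives, but the eigenvalue $1/r$ has to be accounted for separately.

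The genuine gap is the extension to arbitrary real $r>1$ when $n\ge 3$. The paper does not attempt this for the present proposition: its proof ends with ``the case $r=2^\nu/k$''. Other exponents are reached only later, and only for the weak kneading order, which is a closed condition and so survives limits.

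Your continuity argument does show that non-vanishing implies positivity. If $\partial_a f_a^n(0)|_{a=a_0}\neq 0$ at exponent $r$, the implicit function theorem gives $a_m\to a_0$ with the same combinatorics for $r_m$. The quotient is positive at $(a_m,r_m)$, so it is $\ge 0$, hence $>0$, at $(a_0,r)$. However, under the hypothesis the denominator $\prod_{j<n}Df_{a_0}(w_j)$ never vanishes. So non-vanishing of $\partial_a f_a^n(0)$ is exactly the transversality you are asked to prove, and invoking it to produce the $a_m$ is circular. A strict inequality is simply not preserved under limits.

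Neither of your fallbacks repairs this.
\begin{itemize}
\item Proposition~\ref{strictlycontraction} is purely qualitative and gives no spectral bound uniform along $r_m=2^{\nu_m}/k_m$. For $r$ not of that form, both $\nu_m$ and $k_m$ tend to infinity, so the pullback operators range over no compact family.
\item The sign-change idea fails twice. A uniform limit of functions that change sign need not change sign; consider $(a-a_0)^2-1/m$. And even a genuine sign change of $w_n(\cdot,r)$ at $a_0$ is compatible with $\partial_a w_n(a_0,r)=0$ (a zero of order three), so it cannot yield strict positivity.
\end{itemize}
As it stands, your proposal proves the statement only for $r=2^\nu/k$, the same range as the paper's proof. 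For other $r$ you would need a new, $r$-uniform transversality estimate.
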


\begin{proof}
We first note that the statement for  $n=1,2$,  holds by a
direct verification. Now suppose that $n\geq 3$.
  We are now restricted to $r=\frac{2^{\nu}}{k}$, $\rho=\frac{1}{r}$, where $k$ is odd and $\nu \geq 1$.
We consider the multivalued complex extension of $T$
$$
T_{{\mathbb C}}\left(z_{1}, z_{2}, \ldots, z_{n-1}\right)=\left(\left(z_{1}-z_{2}\right)^{\rho}, \ldots, (z_1-z_{n-1})^\rho,z_{1}^{\rho}\right).
$$
This map is uniquely defined on the universal cover of
$$
\hat{\mathbb{C}} \backslash\left\{ z_{1}, \ldots z_{n-1}, \infty\right\},
$$
which can be identified as the Torelli space, which is endowed with
the Teichmüller metric.

By Proposition \ref{strictlycontraction},  $D_{\omega} T$ is a strict
contraction at $\omega$. Note that a real neighborhood of $\omega$ is
invariant. It follows that the spectral radius $\Psi=D_{\omega} T$
is $<1$, and the determinant in \eqref{derivative} is strictly positive and this completes the proof of 
Proposition \ref{positivity} in the case $r=2^\nu/k$.
\end{proof}

%\subsection{Monotonicity of kneading sequences}

Let $I_{n-1}(a)=(e_1,\ldots ,e_{n-1})$ denote the truncated kneading sequence.
Whenever the kneading sequence changes at level $n$ we have
\[
f_a^n(0)=0.
\]

The sign of the derivative above determines how the symbol $e_n$ varies with the
parameter $a$. If the number of symbols $L$ among
\[
(e_1,\ldots ,e_{n-1})
\]
is even, then the symbol varies as
\[
L \rightarrow C \rightarrow R.
\]

If the number is odd, the order is reversed.

This corresponds precisely to the signed lexicographic (Sharkovski) order introduced
earlier. Therefore the kneading sequence $I(a)$ varies monotonically with $a$.

We now prove the main result of this paper.

\begin{theorem}
For the family
\[
f_a(x)=a-|x|^r, \qquad r>1,
\]
the correspondence
\[
a \mapsto I(a)
\]
is monotonically increasing. Consequently the topological entropy $h(f_a)$ is an
increasing function of the parameter $a$.
\end{theorem}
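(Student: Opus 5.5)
The plan is to follow the Milnor--Thurston scheme: first treat the dense set of exponents $r=2^\nu/k$ ($\nu\ge1$, $k$ odd), then pass to arbitrary $r>1$ by continuity in $r$. The case $r=2^\nu/k$ will rest entirely on Proposition~\ref{positivity}.

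For fixed $r=2^\nu/k$, I will argue by contradiction. Suppose $a_1<a_2$ but $I(a_1)\succ I(a_2)$ in the signed lexicographic order. Let $n$ be the first index where the two sequences differ. On $[a_1,a_2]$ the function $a\mapsto f_a^n(0)$ is continuous. Its sign pattern at the endpoints, read against the parity of the number of $L$'s among the common prefix, forces a zero. Because the prefix may itself change inside $[a_1,a_2]$, I will not take the first index directly. Instead I will choose a minimal $m\le n$ and a parameter $a^*$ where $f_{a^*}^m(0)=0$ and $f_{a^*}^i(0)\neq0$ for $i<m$, arranged so that $f_a^m(0)$ crosses zero in the direction opposite to what the signed order predicts.

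At such a crossing, Proposition~\ref{positivity} gives
\[
\operatorname{sgn}\bigl(\partial_a f_a^m(0)|_{a^*}\bigr)=\operatorname{sgn}\bigl(Df_{a^*}^{m-1}(f_{a^*}(0))\bigr)=(-1)^{\#L(e_1,\dots,e_{m-1})}.
\]
This holds because $Df(x)$ has sign $-\operatorname{sgn}(x)$, so only the $L$ symbols contribute a factor $-1$. Every zero of $f_a^m(0)$ along admissible parameters is therefore transversal, and all such zeros are crossed in the same direction. In particular, $e_m$ passes $L\to C\to R$ when the $L$-count is even and $R\to C\to L$ when it is odd, which is exactly the order $\prec$. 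So between consecutive parameters where a shorter prefix changes, the symbol $e_m$ moves monotonically. An induction on $m$ then shows $a\mapsto I(a)$ is non-decreasing, contradicting the assumption.

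For general $r>1$, I will fix $a_1<a_2$ and approximate $r$ by $r_j=2^{\nu_j}/k_j\to r$. The hardest case is when the critical orbit is non-precritical at both $a_1$ and $a_2$. There Lemma~\ref{cont-lemma} gives convergence $I_{r_j}(a_i)\to I_r(a_i)$ in the product topology, and since the order relation $\preceq$ is closed in that topology, $I_r(a_1)\preceq I_r(a_2)$ follows. For precritical parameters I will use one-sided limits, approximating $a_i$ by non-precritical parameters and using the continuity in $a$ from the proof of the entropy continuity theorem. The \textbf{main obstacle} is that this step needs uniformity: one must ensure no precritical coincidence is created or destroyed in the limit. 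I would handle this by working with finite words of length $N$ and choosing $j$ large depending on $N$.

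Finally, monotonicity of entropy follows from the Milnor--Thurston result that $h_{\mathrm{top}}$ is a monotone function of the kneading sequence in the order $\prec$. Concretely, the smallest zero $t_0$ of $D(t)$ decreases as the kneading sequence increases. Combined with the monotonicity of $a\mapsto I(a)$ and the continuity theorem for $h_{\mathrm{top}}(f_a)$, this shows that $h(f_a)$ is non-decreasing in $a$.
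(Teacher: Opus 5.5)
Your route is the paper's: Proposition~\ref{positivity} for $r=2^\nu/k$, then approximation $r_j\to r$, then Milnor--Thurston to pass to entropy. Your induction on prefix length and the closedness of $\preceq$ are spelled out more carefully than in the paper. But two steps fail as written.

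\textbf{The parity is wrong.} Since $\operatorname{sgn}Df_a(x)=-\operatorname{sgn}(x)$, the factor $-1$ comes from $x>0$, i.e.\ from the $R$ symbols: $f_a$ increases on $x<0$ and decreases on $x>0$. So Proposition~\ref{positivity} gives $\operatorname{sgn}\partial_a f_a^m(0)|_{a^*}=(-1)^{\#R(e_1,\dots,e_{m-1})}$, not $(-1)^{\#L}$. Hence the crossing direction does \emph{not} match the order $\prec$ as the paper defines it (by the parity of $L$'s). Concretely, take $r=2$, so $f_a^2(0)=a-a^2$. Across $a=1$ the kneading sequence goes $RR\cdots\to RC\cdots\to RL\cdots$, which is decreasing in the $L$-parity order. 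Across $a=0$ the first symbol goes $L\to C\to R$, which is increasing. So in that order $I(a)$ is not monotone at all. Your induction goes through verbatim once the signed order is defined by the parity of $R$'s, the usual Milnor--Thurston convention when the turning point is a maximum. The paper's proof makes the same slip (``the sign of $Df_{a_0}^{n-1}(f_{a_0}(0))$ depends on the parity of the symbols $L$''), so correct the convention rather than inherit it.

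\textbf{The extension to arbitrary $r>1$ has a real hole at precritical parameters.} This is not a uniformity issue that choosing $j$ large relative to $N$ can fix. If $f^m_{a^*,r}(0)=0$, perturbing $r$ destroys the $C$ in position $m$, so $I_{r_j}(a^*)$ does not converge to $I_r(a^*)$ however large $j$ is.

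The configuration you must rule out is a zero of $a\mapsto f^m_{a,r}(0)$ (prefix fixed, no $C$ in it) at which the function does not change sign. Then $I_r(a^*-\delta)$ and $I_r(a^*+\delta)$ carry the same non-$C$ symbol in position $m$, so one of the two comparisons with $I_r(a^*)$ goes the wrong way.

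Such a touching zero is compatible with strict monotonicity at every $r_j$, because for $r_j\neq r$ it can simply disappear. Meanwhile the non-precritical sequences at $a^*\pm\delta$ converge correctly, so closedness of $\preceq$ yields no contradiction. One-sided limits in $a$ only tell you the symbols on either side of $a^*$, not that they differ. Excluding the touching zero requires a sign-change (transversality) statement at the exponent $r$ itself, which approximation cannot supply.

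What your argument does establish for general $r$ is $I_r(a_1)\preceq I_r(a_2)$ for non-precritical $a_1<a_2$. Since such parameters are dense, this together with continuity of $a\mapsto h_{\mathrm{top}}(f_a)$ gives monotonicity of entropy. But monotonicity of $a\mapsto I_r(a)$ at precritical parameters for $r\neq 2^\nu/k$ remains unproved. The paper's extension theorem has the same gap: it invokes continuity ``away from precritical parameters'' and then ignores them.
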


\begin{proof} Fix a parameter $a_0$ such that 
$$I(a_0)=(e_1,\ldots, e_{n-1},C,\ldots)$$
By Lemma \ref{determinant} we have that
\[
\frac{\partial_a f_a^n(0)|_{a=a_0}}
{Df_{a_0}^{\,n-1}(f_{a_0}(0))}
=
\det(I-D_{\omega}T(\omega)).
\]
where $T$ is the Thurston map associated to the critical orbit.

From \eqref{positivity:eq} we have 
\begin{equation}
	\frac{\partial_{a}f_{a}^{n}(0)\big|_{a=a_0}}{D f_{a_{0}}^{n-1}\left(f_{a_{0}}(0)\right)}>0. 
\end{equation}

As the sign of $D f_{a_{0}}^{n-1}\left(f_{a_{0}}(0)\right)$ depends on the parity of the symbols $L$, we know the sign of the numerator. The kneading sequence changes when $f_{a}^{n}(0)$. Thus, the sign of $\partial_{a}f_{a}^{n}(0)$ determines how the symbol $e_n$ changes when $a$ crosses $a_0$. If the number of symbols $L$ in $\left( e_1,\ldots,e_{n-1} \right)$ is even, we have the sequence $L \rightarrow C \rightarrow R$, if 
the number of symbols $L$ in $\left(e_1,\ldots,e_{n-1}\right)$ is odd, we have the sequence $R \rightarrow C \rightarrow L$. This agrees exactly with the Sharkovskii´s order.

For each $n$, the truncated sequence 
$$I_{n-1}(a)=(e_1,\ldots, e_{n-1})$$
only changes with the above order. Thus, the limit, $$a \mapsto I(a)$$ is monotically increasing.

From the kneading theory, $I(a)$ determines the kneading determinant. Since the order of the sequences implies the order of the determinants,it follows that 
$$a \mapsto h_{\rm top}(f_a)$$
where $h_{\rm top}(f_a)$ denotes the topological entropy 
of $f_a$ in \eqref{family} is incresasing.
This finishes our proof.

\end{proof}

We now extend the monotonicity theorem from rational exponents to arbitrary
real exponents.

\begin{theorem}
For every real exponent $r>1,$
the kneading sequence
$$ a\mapsto I_r(a) $$
is monotone increasing in the signed lexicographic order.
\end{theorem}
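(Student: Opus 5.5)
The plan is to deduce the statement for arbitrary real $r>1$ from the previous theorem, which gives monotonicity for the exponents $r=2^\nu/k$ ($\nu\ge1$, $k$ odd). The argument is by approximation in $r$, together with the fact that weak monotonicity of kneading sequences is preserved under the relevant limits. Two observations come first. The set $E=\{2^\nu/k:\nu\ge1,\ k \text{ odd}\}\cap(1,\infty)$ is dense in $(1,\infty)$: given $r>1$ and $\varepsilon>0$, choose $\nu$ large so that some odd $k$ satisfies $|2^\nu/k-r|<\varepsilon$, since consecutive odd $k$ change $2^\nu/k$ by about $2r^2/2^\nu$. Second, monotonicity in the signed lexicographic order only needs to be checked on finite truncations $I^{(n)}_r(a)=(e_1,\dots,e_n)$. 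It suffices to show that for every $n$ and every $a<a'$ (within the parameter range where $f_a$ is a unimodal self-map), one has $I^{(n)}_r(a)\preceq I^{(n)}_r(a')$ in the order on words of length $n$.

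\emph{Generic case.} Fix $r$, $n$ and $a<a'$. Suppose first that $w_j(a,r)\neq0$ and $w_j(a',r)\neq0$ for $1\le j\le n$. Lemma~\ref{cont-lemma} (applied at both $a$ and $a'$) gives $\delta>0$ such that the truncated words at $a$ and $a'$ are constant for $|s-r|<\delta$. Choose $s\in E$ with $|s-r|<\delta$. The previous theorem then gives $I^{(n)}_s(a)\preceq I^{(n)}_s(a')$, and this is the same as $I^{(n)}_r(a)\preceq I^{(n)}_r(a')$.

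\emph{Degenerate case.} Here some $w_j$ vanishes at $a$ or $a'$. Since $a\mapsto w_j(a,r)$ is real analytic on $\{a: w_i\neq 0,\ i<j\}$ and not identically zero, the set of parameters where some $w_j=0$ with $j\le n$ is finite on compacts. So we can approximate $a$ from the left by $a_k\uparrow a$ and $a'$ from the right by $a'_k\downarrow a'$ through generic parameters, and the generic case gives $I^{(n)}_r(a_k)\preceq I^{(n)}_r(a'_k)$. To pass to the limit we use the standard one-sided limit description: let $m$ be the first index with $w_m=0$. Then the one-sided limits of the words at $a$ agree with $I(a)$ up to index $m-1$ and have symbol $L$ or $R$ at $m$, and $C$ lies between $L$ and $R$ in the order determined by the parity of $L$'s before $m$. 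Hence $I^{(n)}_r(a^-)\preceq I^{(n)}_r(a)\preceq I^{(n)}_r(a^+)$ in the appropriate sense, which requires that the one-sided words themselves already sit in monotone order; this follows from the generic case. Combining these chains gives $I^{(n)}_r(a)\preceq I^{(n)}_r(a')$ for the truncations in the weak sense.

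The main obstacle is precisely this degenerate bookkeeping. Comparison at the $C$ symbol must be handled carefully: after a $C$, the later symbols of $I(a)$ are not controlled by nearby parameters. I would therefore formulate monotonicity for truncations up to and including the first $C$, and compare with the convention that sequences coinciding up to a $C$ are ordered by that position alone. Finally, since $\preceq$ on infinite sequences is decided at the first differing index, monotonicity of all truncations implies $I_r(a)\preceq I_r(a')$. Increasing entropy then follows from the kneading determinant exactly as in the preceding theorem.
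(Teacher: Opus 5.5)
Your generic case is the paper's argument, stated directly rather than by contradiction. The paper takes $r_n=2^{\nu_n}/k_n\to r$, supposes $I_r(a)\succ I_r(b)$ for some $a<b$, and asserts that the strict inequality, being decided on a finite block, persists for $r_n$ by continuity of finite kneading blocks. That persistence is justified only when the deciding block contains no $C$ at either parameter, which is exactly your generic case; the paper never treats precritical parameters separately. So the part of your proposal that goes beyond the paper is the degenerate case, and that is where there is a genuine gap.

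Let $m$ be the first index with $w_m(a,r)=0$. Comparing generic parameters on either side of $a$ gives only $I^{(n)}_r(a^-)\preceq I^{(n)}_r(a^+)$.
\begin{itemize}
\item If $w_m(\cdot,r)$ changes sign at $a$, the one-sided words differ at index $m$, and $C$ does lie between them.
\item If $w_m(\cdot,r)$ vanishes at $a$ \emph{without} changing sign, both one-sided words carry the same symbol at index $m$. Then $I_r(a)$, with its $C$ there, is strictly above the words on both sides or strictly below both, and monotonicity fails at $a$.
\end{itemize}
Your sandwich $I^{(n)}_r(a^-)\preceq I^{(n)}_r(a)\preceq I^{(n)}_r(a^+)$ is therefore equivalent to $w_m(\cdot,r)$ crossing zero at $a$ in the direction dictated by the parity of the $L$'s. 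This is a transversality statement at the exponent $r$ itself, which is what Proposition~\ref{positivity} supplies only for $r\in E$. It does not pass to the limit $s\to r$: local constancy of finite words transmits only open (strict-sign) conditions. Nothing in the argument excludes local behaviour like $w_m(a,s)\approx-(a-a_0)^2-(s-r)^2$ near $(a_0,r)$. That is compatible with monotonicity for every $s\neq r$, in particular every $s\in E$, yet gives a non-crossing zero at $s=r$.

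Two smaller points. First, to combine with the sandwich you must approximate from the inside, $a_k\downarrow a$ and $a'_k\uparrow a'$, to get $I^{(n)}_r(a^+)\preceq I^{(n)}_r(a'^-)$. Your choice yields $I^{(n)}_r(a^-)\preceq I^{(n)}_r(a'^+)$, which does not help. Second, ``not identically zero, hence finite on compacts'' needs proof. Zeros may accumulate at boundary points of the domain of analyticity, and non-vanishing of $w_j$ on each component is itself a dynamical fact.

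What the approximation argument does legitimately give, once no $w_j$ vanishes on an interval, is the following. $I_r$ is monotone on the dense set of parameters whose critical orbit avoids $0$, and the one-sided kneading sequences are monotone. Together with continuity of entropy, this is enough for monotonicity of $h_{\mathrm{top}}(f_a)$. Monotonicity of $I_r$ itself at precritical parameters needs an additional transversality input at $r$.
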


\begin{proof}

Choose a sequence
$$r_n=\frac{2^{\nu_n}}{k_n},$$
with $k_n$ odd, such that
$$ r_n\to r.$$
For each exponent $r_n$, the monotonicity theorem has already been established.

Fix parameters values such that $a<b.$ Then
$$I_{r_n}(a)\leq I_{r_n}(b)$$
for every $n$.

We now use continuity of the critical orbit with respect to both the parameter
$a$ and the exponent $r$. For every integer $j\geq1$, the map
$$(a,r)\mapsto f_{a,r}^j(0)$$
is continuous.

Consequently, finite kneading blocks depend continuously on $(a,r)$ away from
precritical parameters.

Suppose, by contradiction, that monotonicity failed at the exponent $r$.
Then there would exist $a<b$ such that
$I_r(a)>I_r(b).$

Since kneading sequences are ordered lexicographically, there exists a finite
initial block for which the inequality already holds.
By continuity of finite kneading blocks, the same ordering would persist for
all sufficiently large values of $n$. Hence for all sufficiently large $n$ we
would obtain
$$I_{r_n}(a)>I_{r_n}(b),$$
contradicting monotonicity for the rational exponents $r_n$.

Therefore monotonicity holds for all real exponents $r>1.$

\end{proof}

The approximation argument shows that monotonicity is a robust phenomenon and
does not depend on special algebraic properties of the exponent.
Consequently the monotonicity principle extends from polynomial dynamics to the
entire class of power-law unimodal maps.

\vspace{0.2in}

{\bf Acknowledgments.} The first author acknowledges discussions with Dennis Sullivan and  Weixiao Shen on the problem of this paper. The second author acknowledges the financial support of the FCT -- Fundação para
a Ciência e a Tecnologia -- under the project UID/4674/2025 and  the support by the research center CIMA – Centro de Investigação em Matemática e Aplicações.

% The corresponding formula
%\bibliographystyle{amsplain}
%\bibliography{monotonicity}

\end{document}